\newcommand{\CM}{{\mathrm{CM}}}
\newcommand{\Ker}{{\mathrm{Ker}\,}}
\newcommand{\Core}{{\mathrm{Core}\,}}
\newcommand{\Fix}{{\mathrm{Fix}\,}}
\newcommand{\Smooth}{{\mathrm{Smooth}\,}}
\newcommand{\Orbit}{{\mathrm{Orbit}}}
\newcommand{\lcm}{{\mathrm{lcm}}}
\newtheorem{theorem}{Theorem}%[section]
\newtheorem{lemma}[theorem]{Lemma}
\newtheorem{proposition}[theorem]{Proposition}
\newtheorem{corollary}[theorem]{Corollary}
\theoremstyle{definition}
\newtheorem{definition}[theorem]{Definition}%[section]
\newtheorem{example}[theorem]{Example}%[section]
\newtheorem{remark}{Remark}%[section]
\newtheorem*{case}{Case}%[section]
\begin{document}
\begin{frontmatter}
\title{Smooth skew-morphisms of the dihedral groups}
%\tnotetext[mytitlenote]{Fully documented templates are available in the elsarticle package on \href{http://www.ctan.org/tex-archive/macros/latex/contrib/elsarticle}{CTAN}.}

%% Group authors per affiliation:
%%\author{Kan Hu\fnref{myfootnote}}
%%\address{School of Mathematics, Physics and Information Science, Zhejiang Ocean University, Zhoushan, Zhejiang 316022, People's Republic of China}
%%\fntext[myfootnote]{Since 1880.}

%% or include affiliations in footnotes:

\author[addr1,addr2]{Na-Er Wang}
\ead{wangnaer@zjou.edu.cn}
\author[addr1,addr2]{Kan Hu\corref{mycorrespondingauthor}}
\cortext[mycorrespondingauthor]{Corresponding author}
%\ead[url]{www.elsevier.com}
\ead{hukan@zjou.edu.cn}
\author[addr3]{Kai Yuan}
\ead{pktide@163.com}
\author[addr4]{Jun-Yang Zhang}
\ead{jyzhang@cqnu.edu.cn}
\address[addr1]{School of Mathematics, Physics and Information Science, Zhejiang Ocean University, Zhoushan, Zhejiang 316022, People's Republic of China}
\address[addr2]{Key Laboratory of Oceanographic Big Data Mining \& Application of Zhejiang Province, Zhoushan, Zhejiang 316022, People's Republic of China}
\address[addr3]{School of Mathematics, Capital Normal University, Beijing 100037, People's Republic of China}
\address[addr4]{School of Mathematical Sciences, Chongqing Normal University, Chongqing 401331, People's Republic of China}
\begin{abstract}
A skew-morphism $\varphi$ of a finite group $A$ is a permutation on $A$ such that $\varphi(1)=1$ and $\varphi(xy)=\varphi(x)\varphi^{\pi(x)}(y)$ for all
$x,y\in A$ where $\pi:A\to\mathbb{Z}_{|\varphi|}$ is an integer function. A skew-morphism is smooth if $\pi(\varphi(x))=\pi(x)$ for all $x\in A$. The concept of smooth skew-morphisms is a generalization of that of $t$-balanced skew-morphisms. The aim of the paper is to develop a general theory of  smooth skew-morphisms. As an application we classify smooth skew-morphisms of the dihedral groups.
\end{abstract}

\begin{keyword}
regular Cayley map\sep smooth skew-morphism\sep invariant subgroup
\MSC[2010]  05E18, 20B25, 05C10
\end{keyword}
\end{frontmatter}

\section{Introduction}
A \textit{skew-morphism} of a finite group $A$ is a permutation $\varphi$ of order $n$ on the underlying set of $A$
fixing the identity element of $A$, and for which there exists an integer function $\pi:A\to\mathbb{Z}_n$ such that $\varphi(xy)=\varphi(x)\varphi^{\pi(x)}(y)$
for all $x,y\in A$. The function $\pi$ is called the \textit{power function} associated with $\varphi$. The concept of skew-morphisms was first introduced
by Jajcay and \v{S}ir\'a\v{n} as an important tool to investigate regular Cayley maps~\cite{JS2002}. It has been shown that skew-morphisms are also closely related to group factorisations with a cyclic complement~\cite[Proposition 3.1]{CJT2016}. Thus the study of skew-morphisms is important for both combinatorics and algebra.

 Let $A$ be a finite group, $X$ a generating set of $A$ and $P$ a cyclic permutation of $X$. A \textit{Cayley map} $M=\CM(A,X,P)$ is a $2$-cell embedding of a Cayley graph $G=C(A,X)$ into an orientable surface such that the local
cyclic orientation of the darts $(g,x)$ emanating from any vertex $g$ induced by the orientation of the supporting surface agrees to
the prescribed cyclic permutation $P$ of $X$. The left regular representation
 of the underlying group $A$ of a Cayley map $M=\CM(A,X,P)$ induces a vertex-transitive action of a subgroup of orientation-preserving
 automorphisms of $M$ on the vertices of the map. It follows that $M$ is regular if and only if $M$ admits an automorphism which fixes a vertex,
 say the identity vertex $1$, and maps the dart $(1,x)$ to $(1,P(x))$. It is a non-trivial observation that a Cayley map $\CM(A,X,P)$ is regular if and only if there is a skew-morphism $\varphi$ of $A$ such that the restriction $\varphi\restriction_X$ of $\varphi$ to $X$ is $P$~\cite[Theorem 1]{JS2002}.

Among the variety of problems considered in this direction the most important seems to be the classification of regular Cayley maps for a given family of finite groups. This problem is completely settled for finite cyclic groups~\cite{CT2014}, and only partial results are known for other abelian groups~\cite{CJT2007, CJT2016,Zhang2014}.
For dihedral groups $D_n$ of order $2n$, if $n$ is odd then this problem is solved~\cite{KMM2013},  whereas if $n$ is even only partial classification is at hand~\cite{KK2016,KKF2006,WF2005,Zhang2014,Zhang20152,Zhang2015}.
For other non-abelian groups the interested reader is referred to \cite{KO2008, Oh2009,WF2005}.

Although skew-morphisms are usually investigated along with regular Cayley
maps, they deserve an independent study in a purely algebraic setting. Let $G=AB$ be a finite group factorisation where $A$ and $B$ are subgroups of $G$ with $A\cap B=1$. If $B=\langle b\rangle$ is cyclic then the commuting rule $bx=\varphi(x)b^{\pi(x)}$ for all $x\in A$ determines a skew-morphism $\varphi$ of $A$ with the associated power function $\pi$. Conversely each skew-morphism $\varphi$ of $A$ determines a group factorisation $A_L\langle \varphi\rangle$ with $A_L\cap\langle\varphi\rangle=1$ where $A_L$ denotes the left regular representation of $A$~\cite[Proposition 3.1]{CJT2016}. Thus there is a correspondence between skew-morphisms and group factorisations with cyclic complements.

For a skew-morphism $\varphi$ of $A$ of order $n$, it has been well known that the subgroups $\Ker\varphi=\{x\in A\mid \pi(x)=1\}$ and $\Core\varphi=\bigcap_{i=1}^n\varphi^i(\Ker\varphi)$, called the \textit{kernel} and \textit{core} of $\varphi$ respectively, play important roles in the investigation of skew-morphisms. Based on properties of these subgroups this paper is devoted to the exposition of a general theory
 on skew-morphisms $\varphi$ for which the kernel $\Ker\varphi$ is invariant with respect to $\varphi$, that is, $\varphi(\Ker\varphi)=\Ker\varphi$.
 Particular attention will be paid on a subclass which we call \textit{smooth} skew-morphisms, which means that the associated power function $\pi$ takes constant values on orbits of $\varphi$. Smooth skew-morphisms of the cyclic groups have been recently classified by Bachrat\'y and Jajcay in~\cite{BJ2014, BJ2016}. In this paper employing the theory developed we present a classification of smooth skew-morphisms of the dihedral groups.

\section{Preliminaries}
In this section we summarise some basic results on skew-morphisms which will be used later.

\begin{lemma}\label{BASIC} \cite{JS2002}
  Let $\varphi$ be a skew-morphism of a finite group $A$, and $\pi:A\to \mathbb{Z}_n$ be the associated power function where $n=|\varphi|$. Then the following hold true:
\begin{itemize}
\item[\rm(i)]for any positive integer $k$ and for all $x,y\in A$, $\varphi^k(xy)=\varphi^k(x)\varphi^{\sigma(x,k)}(y)$  where
$\sigma(x,k)=\sum\limits_{i=1}^k\pi(\varphi^{i-1}(x)),$
\item[\rm(ii)]for all $x,y\in A$, $\pi(xy)\equiv\sigma(y,\pi(x))\pmod{n}$,
\item[\rm(iii)] $K:=\Ker\varphi=\{x\in A\mid \pi(x)=1\}$ is a subgroup of $A$,
\item[\rm(iv)]for all $x,y\in A$, $\pi(x)=\pi(y)$ if and only if $Kx=Ky$,
\item[\rm(v)]$\Fix\varphi=\{x\in A\mid \varphi(x)=x\}$ is a $\varphi$-invariant subgroup of $A$.
\end{itemize}
\end{lemma}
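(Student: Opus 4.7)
The lemma bundles five foundational properties, and the natural plan is to prove them in the stated order since each part relies on the earlier ones. I would begin with (i) by straightforward induction on $k$: the base case $k=1$ is the defining identity, and in the inductive step, applying $\varphi$ to $\varphi^k(xy) = \varphi^k(x)\varphi^{\sigma(x,k)}(y)$ and splitting via the defining identity yields the new exponent $\sigma(x,k) + \pi(\varphi^k(x)) = \sigma(x,k+1)$. For (ii), I would evaluate $\varphi((xy)z)$ in two ways: once by applying the defining identity directly to $(xy)\cdot z$, and once via associativity as $\varphi(x\cdot(yz))$, expanding the inner product by (i). Comparing the exponents of $\varphi$ acting on $z$ then forces $\pi(xy)\equiv\sigma(y,\pi(x))\pmod n$.

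For (iii), the starting point is $\pi(1)=1$, obtained by setting $x=1$ in the defining identity so that $\varphi=\varphi^{\pi(1)}$ as permutations. Closure of $K$ under multiplication then follows from (ii): if $\pi(x)=\pi(y)=1$, then $\pi(xy)=\sigma(y,1)=\pi(y)=1$; since a non-empty finite subset of $A$ closed under multiplication is automatically a subgroup, $K$ is a subgroup. For (iv), the easy direction is that $\pi$ is constant on each right coset $Kx$: for $k\in K$, (ii) gives $\pi(kx)=\sigma(x,\pi(k))=\sigma(x,1)=\pi(x)$. The converse is the subtle step. Applying (ii) twice, once to the product $x\cdot x^{-1}=1$ and once to $y\cdot x^{-1}$, yields
\[
1=\pi(1)=\sigma(x^{-1},\pi(x))\quad\text{and}\quad \pi(yx^{-1})=\sigma(x^{-1},\pi(y)).
\]
Under the hypothesis $\pi(x)=\pi(y)$ the right-hand sides coincide, so $\pi(yx^{-1})=1$, whence $yx^{-1}\in K$ and $Ky=Kx$.

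Finally, for (v), I would verify the subgroup axioms directly. Clearly $1\in\Fix\varphi$; and if $\varphi(x)=x$ and $\varphi(y)=y$, then $\varphi(xy)=\varphi(x)\varphi^{\pi(x)}(y)=xy$, since $y$, being fixed by $\varphi$, is fixed by every power of $\varphi$. Finiteness supplies inverses, and $\varphi$-invariance is immediate from $\varphi(x)=x\in\Fix\varphi$.

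I expect the only genuine obstacle is the converse in (iv); the rest is bookkeeping once one thinks to evaluate $\varphi((xy)z)$ in two different ways. The reverse implication in (iv) is not obvious a priori, but once one spots that $\sigma(x^{-1},\,\cdot\,)$ serves as a common bridge between $\pi(x)$ and $\pi(yx^{-1})$, the argument closes cleanly by reducing everything to the known value $\pi(1)=1$.
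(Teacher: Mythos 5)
Your proof is correct, and all five arguments are the standard ones (induction for (i), evaluating $\varphi((xy)z)$ in two ways for (ii), and the coset argument via $\sigma(x^{-1},\cdot)$ for (iv)); note that the paper itself gives no proof of this lemma, citing it to Jajcay and \v{S}ir\'a\v{n}, so there is nothing to diverge from. The only point worth flagging is cosmetic: in (ii) and (iv) the quantity $\sigma(y,\pi(x))$ involves summing $\pi(x)$ terms where $\pi(x)\in\mathbb{Z}_n$, so one should fix a representative of $\pi(x)$ in $\{1,\dots,n\}$; your argument in (iv) is unaffected since it compares $\sigma(x^{-1},\pi(x))$ and $\sigma(x^{-1},\pi(y))$ with the same representative.
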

The subgroups $\Ker\varphi$ and $\Fix\varphi$ of $A$ will be called the \textit{kernel} and \textit{fixed-point subgroup} of $\varphi$.
\begin{lemma}\cite{ Zhang2015}
  Let $\varphi$ be a skew-morphism of a finite group $A$, and $\pi:A\to \mathbb{Z}_n$ be the associated power function where $n=|\varphi|$.
Then the set $\Core\varphi=\bigcap\limits_{i=1}^n\varphi^i(\Ker\varphi)$ is a $\varphi$-invariant normal subgroup of $A$ contained in $\Ker\varphi$.
\end{lemma}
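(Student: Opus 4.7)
Write $K=\Ker\varphi$ and $N=\bigcap_{i=1}^{n}\varphi^i(K)$. Using that $\varphi^n=\mathrm{id}$, I would first re-express
\[
N=\{x\in A \mid \varphi^j(x)\in K\text{ for all }j\ge 0\}.
\]
Taking $j=0$ shows $N\subseteq K$, while $\varphi(N)=N$ is immediate from this characterisation. The crucial observation is that $x\in N$ forces $\pi(\varphi^{j}(x))=1$ for every $j$, so by Lemma \ref{BASIC}(i) we have $\sigma(x,k)=k$ and $\varphi^j(xy)=\varphi^j(x)\varphi^j(y)$ for all $y\in A$ and $k,j\ge 0$. Specialising to $y\in N$ makes $N$ closed under products, and taking $y=x^{-1}$ makes it closed under inverses, so $N$ is a subgroup.

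The one real step is normality in $A$. Fix $a\in A$, $x\in N$ and $j\ge 0$; I will show $\varphi^j(axa^{-1})\in K$, which (as $j$ varies) yields $axa^{-1}\in N$. From $\varphi^j(aa^{-1})=\varphi^j(1)=1$ and Lemma \ref{BASIC}(i) I extract the cancellation
\[
\varphi^{\sigma(a,j)}(a^{-1})=\varphi^j(a)^{-1}.
\]
Applying Lemma \ref{BASIC}(i) twice to $a\cdot x\cdot a^{-1}$ and using $\sigma(x,\cdot)=\mathrm{id}$, the expression collapses to conjugation form:
\[
\varphi^j(axa^{-1})=\varphi^j(a)\,\varphi^{\sigma(a,j)}(x)\,\varphi^{\sigma(a,j)}(a^{-1})=b\,y\,b^{-1},
\]
with $b=\varphi^j(a)$ and $y=\varphi^{\sigma(a,j)}(x)\in N$. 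Two applications of Lemma \ref{BASIC}(ii) together with $\sigma(y,\cdot)=\mathrm{id}$ then give
\[
\pi(byb^{-1})=\sigma(b^{-1},\pi(by))=\sigma(b^{-1},\pi(b))=\pi(bb^{-1})=\pi(1)=1,
\]
so $\varphi^j(axa^{-1})=byb^{-1}\in K$ as required.

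I expect the main obstacle to be locating the cancellation $\varphi^{\sigma(a,j)}(a^{-1})=\varphi^j(a)^{-1}$: without it the skew-formula expansion of $\varphi^j(axa^{-1})$ is not of the clean conjugation shape $b\,y\,b^{-1}$, and the iterated use of Lemma \ref{BASIC}(ii) that lands at $\pi(1)=1$ cannot be carried out. Once the conjugation form is in hand, everything else is a mechanical consequence of $\sigma(z,\cdot)=\mathrm{id}$ for $z\in N$. This is also the place where the restrictive definition of $N$ (entire $\varphi$-orbit inside $K$, not merely one element) becomes essential: for a $y$ only in $K$ the equality $\sigma(y,\pi(b))=\pi(b)$ would already fail as soon as $\pi(b)>1$.
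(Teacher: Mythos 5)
The paper does not prove this lemma---it is quoted from the cited reference [Zhang2015] without proof---so there is no in-paper argument to compare against. Your proof is correct and complete: the reformulation $N=\{x\mid \varphi^{j}(x)\in K\ \text{for all}\ j\ge 0\}$ is valid because $\varphi^{n}=\mathrm{id}$, the subgroup and $\varphi$-invariance claims follow as you say from $\sigma(x,k)=k$ for $x\in N$ together with Lemma~\ref{BASIC}(i) and the fact that $K$ is a subgroup, and the normality computation is sound: the cancellation $\varphi^{\sigma(a,j)}(a^{-1})=\varphi^{j}(a)^{-1}$ does put $\varphi^{j}(axa^{-1})$ in the conjugation form $byb^{-1}$ with $y\in N$, and the two applications of Lemma~\ref{BASIC}(ii) combined with $\sigma(y,\cdot)=\mathrm{id}$ correctly land at $\pi(byb^{-1})=\pi(1)=1$. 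This is essentially the standard argument for this fact, and your closing remark is apt: membership of the whole $\varphi$-orbit of $x$ in $K$ (not just $x\in K$) is exactly what makes $\sigma(y,\pi(b))=\pi(b)$ available, which is why $\Core\varphi$ is normal while $\Ker\varphi$ in general is not.
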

\begin{lemma}\cite{Hu2012}\label{CONJ}
  Let $\varphi$ be a skew-morphism of a finite group $A$, and $\pi:A\to \mathbb{Z}_n$ be the associated power function where $n=|\varphi|$. Then
for any automorphism $\gamma$ of $A$, $\psi=\gamma^{-1}\varphi\gamma$ is a skew-morphism of $A$
with power function $\pi_\psi=\pi\gamma^{-1}$. In particular $\Ker\psi=\gamma^{-1}(\Ker\varphi)$ and $\Core\psi=\gamma^{-1}(\Core\varphi)$.
\end{lemma}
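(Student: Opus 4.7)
The lemma is essentially a direct computation: conjugation by an automorphism of $A$ interacts transparently with the skew-morphism identity. My plan is to (i) check $\psi(1)=1$, (ii) unpack $\psi(xy)$ and read off the power function, and (iii) deduce the kernel and core identities as easy consequences. Step (i) is immediate: $\psi(1)=\gamma^{-1}\varphi\gamma(1)=\gamma^{-1}\varphi(1)=1$, since both $\gamma$ and $\varphi$ fix the identity.

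For (ii), given $x,y\in A$, I would compute $\psi(xy)=\gamma^{-1}(\varphi(\gamma(x)\gamma(y)))=\gamma^{-1}(\varphi(\gamma(x))\varphi^{\pi(\gamma(x))}(\gamma(y)))=\psi(x)\psi^{\pi(\gamma(x))}(y)$, where the successive equalities use that $\gamma$ is a homomorphism, the skew-morphism rule for $\varphi$ applied to the pair $(\gamma(x),\gamma(y))$, and that $\gamma^{-1}$ is a homomorphism combined with the telescoping identity $\gamma^{-1}\varphi^{k}\gamma=\psi^{k}$ for every $k\ge 0$. Since $|\psi|=|\gamma^{-1}\varphi\gamma|=|\varphi|=n$, the exponent lies in $\mathbb{Z}_n$, which simultaneously shows that $\psi$ is a skew-morphism and that its power function is the asserted composition of $\pi$ with $\gamma$.

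For (iii), the kernel identity is immediate from the formula for $\pi_\psi$: $x\in\Ker\psi$ iff $\pi_\psi(x)=1$ iff $\gamma(x)\in\Ker\varphi$ iff $x\in\gamma^{-1}(\Ker\varphi)$. For the core, I would use the identity $\psi^{i}=\gamma^{-1}\varphi^{i}\gamma$ once more to obtain $\psi^{i}(\Ker\psi)=\gamma^{-1}\varphi^{i}(\Ker\varphi)$, and then pull $\gamma^{-1}$ outside the intersection $\bigcap_{i=1}^{n}\psi^{i}(\Ker\psi)$ to recognise it as $\gamma^{-1}(\Core\varphi)$.

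No step is a genuine obstacle; the entire argument is a short chain of routine substitutions. The only point requiring any care is the composition convention, which decides whether the power function is written $\pi\gamma$ or $\pi\gamma^{-1}$ and correspondingly which of $\gamma,\gamma^{-1}$ appears in the kernel and core formulas. Once a convention is fixed consistently, all three identities of the lemma drop out uniformly from the same computation.
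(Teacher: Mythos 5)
Your argument is correct and complete; note that the paper itself gives no proof of this lemma (it is cited to the unpublished preprint [Hu2012]), so there is nothing internal to compare against. Your three steps --- $\psi(1)=1$, the chain $\psi(xy)=\gamma^{-1}\bigl(\varphi(\gamma(x))\varphi^{\pi(\gamma(x))}(\gamma(y))\bigr)=\psi(x)\psi^{\pi(\gamma(x))}(y)$ via $\gamma^{-1}\varphi^{k}\gamma=\psi^{k}$, and the transport of $\Ker$ and $\Core$ --- are exactly the routine computation one would expect, and pulling $\gamma^{-1}$ through the intersection defining $\Core\psi$ is legitimate because $\gamma^{-1}$ is a bijection.

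The convention issue you flag at the end is not merely cosmetic: it exposes a typo in the statement itself. Under the paper's left-composition convention, $\gamma^{-1}\varphi\gamma(x)=\gamma^{-1}(\varphi(\gamma(x)))$ (this is the reading used explicitly in the proof of Theorem~\ref{MAIN2}(iv)), and your computation then yields $\pi_\psi=\pi\gamma$, i.e.\ $\pi_\psi(x)=\pi(\gamma(x))$. That is the only version compatible with the kernel formula: from $\pi_\psi(x)=\pi(\gamma(x))$ one gets $\Ker\psi=\gamma^{-1}(\Ker\varphi)$ as stated, whereas $\pi_\psi=\pi\gamma^{-1}$ would force $\Ker\psi=\gamma(\Ker\varphi)$. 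So the two displayed claims of the lemma are mutually inconsistent as printed, and your derivation settles which one is right: the kernel and core formulas (which are the ones actually used later, e.g.\ in Lemma~\ref{CLASSLEM}) are correct, and the power-function formula should read $\pi_\psi=\pi\gamma$. It would strengthen your write-up to state this resolution explicitly rather than leaving it as ``once a convention is fixed.''
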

\begin{lemma}\cite{BJ2014, CJT2016}\label{POWER}
  Let $\varphi$ be a skew-morphism of a finite group $A$, and $\pi:A\to \mathbb{Z}_n$ be the associated power function where $n=|\varphi|$.
Then for any positive integer $k$, $\mu=\varphi^k$ is a skew-morphism of $A$ if and only if the congruences \begin{align}\label{CONG}
 kt\equiv\sigma(x,k)\pmod{n}
 \end{align}
 are solvable for all $x\in A$, where $\sigma(x,k)=\sum\limits_{i=1}^k\pi(\varphi^{i-1}(x)).$  Moreover, if $\mu$ is a skew-morphism of $A$,
then it has order $m=n/\gcd(n,k)$ and for each $x\in A$, $\pi_\mu(x)$
is the solution of Eq.~\eqref{CONG} in $\mathbb{Z}_m$; in particular $\Core\varphi\leq\Core\mu$.
 \end{lemma}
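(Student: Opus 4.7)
The plan is to substitute $\mu = \varphi^k$ into the skew-morphism identity and extract the necessary and sufficient condition directly. By Lemma~\ref{BASIC}(i) applied to $\varphi$, one has
$$\mu(xy) \;=\; \varphi^k(xy) \;=\; \varphi^k(x)\,\varphi^{\sigma(x,k)}(y) \;=\; \mu(x)\,\varphi^{\sigma(x,k)}(y).$$
If $\mu$ is a skew-morphism with some power function $\pi_\mu$, then also $\mu(xy) = \mu(x)\mu^{\pi_\mu(x)}(y) = \mu(x)\varphi^{k\pi_\mu(x)}(y)$. Comparing the two expressions, and using that $\varphi$ has order $n$ so $\varphi^a = \varphi^b$ forces $a \equiv b \pmod n$, I obtain $k\,\pi_\mu(x) \equiv \sigma(x,k) \pmod n$; thus $t = \pi_\mu(x)$ solves Eq.~\eqref{CONG}. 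Conversely, assuming Eq.~\eqref{CONG} is solvable for each $x$, I pick a solution $t_x$, set $\pi_\mu(x) := t_x$, and run the calculation backwards to recover $\mu(xy) = \mu(x)\mu^{\pi_\mu(x)}(y)$. Since $\mu(1) = \varphi^k(1) = 1$ and $\mu$ inherits bijectivity from $\varphi$, $\mu$ is a skew-morphism.

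For the ``moreover'' clauses, the order of $\mu$ is $|\varphi^k| = n/\gcd(n,k) = m$, so the power function $\pi_\mu$ must land in $\mathbb{Z}_m$. The linear congruence $kt \equiv \sigma(x,k) \pmod n$, whenever solvable, has a unique solution modulo $n/\gcd(n,k) = m$, and that residue is the intended value of $\pi_\mu(x) \in \mathbb{Z}_m$.

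For the inclusion $\Core\varphi \leq \Core\mu$, I exploit that $\Core\varphi$ is $\varphi$-invariant and contained in $\Ker\varphi$. If $x \in \Core\varphi$, every iterate $\varphi^{i-1}(x)$ stays in $\Core\varphi \leq \Ker\varphi$, so $\pi(\varphi^{i-1}(x)) = 1$ for $1 \leq i \leq k$, whence $\sigma(x,k) = k$. Eq.~\eqref{CONG} then reduces to $kt \equiv k \pmod n$, whose unique solution modulo $m$ is $t = 1$; so $\pi_\mu(x) = 1$ and $\Core\varphi \leq \Ker\mu$. Since $\mu = \varphi^k$ also preserves $\Core\varphi$, for every $i$ I have $\Core\varphi = \mu^i(\Core\varphi) \leq \mu^i(\Ker\mu)$, and intersecting over $i$ yields $\Core\varphi \leq \Core\mu$.

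I do not foresee a real obstacle: the argument is mostly bookkeeping around a single linear congruence. The one subtle point worth double-checking is that $\pi_\mu$ is well-defined as a map $A \to \mathbb{Z}_m$ (not merely into $\mathbb{Z}_n$), which rests on the uniqueness-modulo-$m$ clause for solutions of $kt \equiv \sigma(x,k) \pmod n$.
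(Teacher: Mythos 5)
The paper states Lemma~\ref{POWER} as a cited result from \cite{BJ2014, CJT2016} and gives no proof of its own, so there is nothing internal to compare against; judged on its merits, your argument is correct and is the standard derivation. You correctly reduce both directions to comparing the exponents in $\varphi^{k}(xy)=\varphi^{k}(x)\varphi^{\sigma(x,k)}(y)$ versus $\mu(x)\varphi^{k\pi_\mu(x)}(y)$ modulo $n$, you handle the well-definedness of $\pi_\mu$ in $\mathbb{Z}_m$ via uniqueness of the solution modulo $n/\gcd(n,k)$, and the core inclusion follows cleanly from the $\varphi$-invariance of $\Core\varphi$ together with $\Core\varphi\leq\Ker\mu$. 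No gaps.
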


\begin{lemma}\label{LESS}\cite{CJT2016}
  Let $\varphi$ be a skew-morphism of a non-trivial finite group $A$. Then $|\varphi|\leq |A|$ and $\Ker\varphi>1$.
\end{lemma}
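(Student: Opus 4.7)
Write $n=|\varphi|$, $K=\Ker\varphi$, and let $\pi:A\to\mathbb{Z}_n$ be the power function. Since the case $n=1$ is trivial (then $\varphi=\mathrm{id}$ and $K=A$), assume $n\ge 2$. The first step is the observation that $\pi$ never takes the value $0$: if $\pi(x_0)=0$ for some $x_0\in A$, the skew-morphism relation becomes $\varphi(x_0y)=\varphi(x_0)y$ for all $y\in A$; substituting $y=x_0^{-1}$ gives $\varphi(x_0)=x_0$, and then $\varphi(x_0y)=x_0y$ for every $y$ forces $\varphi=\mathrm{id}$, contradicting $n\ge2$. Consequently $\pi(A)\subseteq\mathbb{Z}_n\setminus\{0\}$, and by Lemma~\ref{BASIC}(iv) the induced map $\bar\pi:A/K\hookrightarrow\mathbb{Z}_n\setminus\{0\}$ on right cosets is injective, yielding
\begin{equation*}
  [A:K]\le n-1.
\end{equation*}

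For the bound $|\varphi|\le|A|$ I would analyse the $\varphi$-orbit structure on $A$ via the orbit-sum identity. Applying Lemma~\ref{BASIC}(i) with $k=n$ and invoking $\varphi^n=\mathrm{id}$ forces $\varphi^{\sigma(x,n)}=\mathrm{id}$, hence $n\mid\sigma(x,n)=\sum_{i=1}^{n}\pi(\varphi^{i-1}(x))$; since these iterates traverse the $\varphi$-orbit $O_x$ of $x$ exactly $n/\ell_x$ times (where $\ell_x=|O_x|$), this sharpens to
\begin{equation*}
  \ell_x \mid \sum_{z\in O_x}\pi(z).
\end{equation*}
Combining this divisibility with the coset-constancy of $\pi$ and with Lemma~\ref{POWER}, which controls when the iterates $\varphi^k$ are themselves skew-morphisms, one may rule out orbit-length combinations whose least common multiple exceeds $|A|$. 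Once $n\le|A|$ is established, the kernel claim is immediate: $K=1$ would force $|A|\le n-1\le|A|-1$ via the earlier injection, a contradiction.

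\emph{The main obstacle} is the sharp bound $n\le|A|$. Pure combinatorial analysis of cycle lengths is insufficient, since the least common multiple of cycle lengths of an arbitrary permutation on $N$ points can greatly exceed $N$; the skew-morphism identity must therefore be exploited essentially. The delicate point is to use the orbit-sum divisibility above together with Lemma~\ref{POWER} to constrain how $\varphi$-orbit lengths interact with the right-coset structure of $K$ in $A$, thereby excluding permutation patterns of order larger than $|A|$ among genuine skew-morphisms.
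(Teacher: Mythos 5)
The paper itself does not prove this lemma; it imports both assertions from \cite{CJT2016}. Measured against what a complete proof requires, your proposal has a genuine gap at its central step. The parts you do carry out are fine: the observation that $\pi(x_0)=0$ forces $\varphi(x_0)=x_0$ and hence $\varphi=\mathrm{id}$ is correct, so $\pi(A)\subseteq\mathbb{Z}_n\setminus\{0\}$ and, via Lemma~\ref{BASIC}(iv), $[A:\Ker\varphi]\le n-1$; the orbit-sum divisibility $\ell_x\mid\sum_{z\in O_x}\pi(z)$ is exactly Lemma~\ref{ORBIT1}; and the final reduction (if $n\le|A|$ and $K=1$ then $|A|=[A:K]\le n-1\le|A|-1$, absurd) is valid. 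But the inequality $|\varphi|\le|A|$ itself --- which is half of the statement and the input needed for your kernel argument --- is never established. The sentence ``one may rule out orbit-length combinations whose least common multiple exceeds $|A|$'' is a hope, not an argument, and you acknowledge as much in your closing paragraph.

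Moreover, the ingredients you propose to combine are demonstrably insufficient. The constraints you have in hand are purely numerical: orbit lengths divide $n$ and sum to $|A|$, their lcm is $n$, each orbit sum of $\pi$-values is divisible by the orbit length, and $\pi$ is constant on the at most $n-1$ right cosets of $K$. One can easily write down cycle-type and $\pi$-value patterns satisfying all of these with $\mathrm{lcm}$ of the cycle lengths far exceeding the number of points (e.g.\ lengths $1,2,3,5$ on $11$ points with $n=30$), so no purely arithmetic bookkeeping of this kind can close the gap; the algebraic structure must enter in a way you have not identified. The proof in \cite{CJT2016} goes through the associated factorisation $G=A_L\langle\varphi\rangle$ with $A_L\cap\langle\varphi\rangle=1$ and $\langle\varphi\rangle$ core-free (mentioned in the Introduction and Lemma~\ref{POWER}'s source), together with an inductive use of quotients by invariant normal subgroups as in Lemma~\ref{QUOTIENT}; some such global group-theoretic input is unavoidable. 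As written, your submission proves only $[A:\Ker\varphi]\le n-1$ and a conditional version of the kernel claim.
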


\begin{lemma}\cite{JN2014}\label{INVERSE}
 Let $\varphi$ be a skew-morphism of a finite group $A$, then for each $x\in A$,  $O_{x^{-1}}=O_x^{-1}$
  where $O_x^{-1}=\{x^{-1}\mid x\in O_x\}$. \end{lemma}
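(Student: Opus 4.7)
My plan is to apply the skew-morphism identity of Lemma~\ref{BASIC}(i) directly to the trivial relation $x \cdot x^{-1} = 1$. Since $\varphi$ fixes the identity, every iterate $\varphi^k$ does so as well, and therefore
$$1 \;=\; \varphi^k(1) \;=\; \varphi^k(x \cdot x^{-1}) \;=\; \varphi^k(x)\,\varphi^{\sigma(x,k)}(x^{-1})$$
for every $k\geq 0$, which rearranges to
$$\varphi^{\sigma(x,k)}(x^{-1}) \;=\; \bigl(\varphi^k(x)\bigr)^{-1}.$$
As $k$ ranges over $\{0,1,\dots,|O_x|-1\}$, the right-hand side exhausts $O_x^{-1}$ while the left-hand side always lies in $O_{x^{-1}}$. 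This immediately yields the inclusion $O_x^{-1} \subseteq O_{x^{-1}}$.

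For the reverse inclusion I would run exactly the same argument with $x$ replaced by $x^{-1}$; this gives $O_{x^{-1}}^{-1} \subseteq O_{(x^{-1})^{-1}} = O_x$, and taking inverses of both sides produces $O_{x^{-1}} \subseteq O_x^{-1}$. Combining the two inclusions completes the proof.

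The only real decision in the argument is spotting the right identity to feed into the skew-morphism formula; once one writes $xx^{-1}=1$, the rest is a single application of Lemma~\ref{BASIC}(i) together with the symmetry $x\leftrightarrow x^{-1}$. I therefore do not expect any substantive obstacle in carrying out this plan.
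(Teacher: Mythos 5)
Your proof is correct. The paper itself gives no proof of this lemma --- it is quoted from \cite{JN2014} --- but your argument is precisely the standard one: the identity $1=\varphi^k(xx^{-1})=\varphi^k(x)\varphi^{\sigma(x,k)}(x^{-1})$ is exactly the computation the paper performs right afterwards in its proof of Lemma~\ref{ORBIT1}, your two inclusions and the symmetry step $x\leftrightarrow x^{-1}$ are all sound, and nothing circular is used (only Lemma~\ref{BASIC}(i)).
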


\begin{lemma}\label{ORBIT1}\cite{Hu2012}
 Let $\varphi$ be a skew-morphism of a finite group $A$, and $\pi:A\to\mathbb{Z}_n$ the associated power function. Then for each
$x\in A$,
\[
\sigma(x,m)\equiv0\pmod{m},
\]
 where $\sigma(x,m)=\sum\limits_{i=1}^{m-1}\pi(\varphi^{i-1}(x))$ and
$m=|O_x|$ is length of the orbit $O_x$ containing $x$. Moreover, $\sigma(x,n)\equiv0\pmod{n}$.
\end{lemma}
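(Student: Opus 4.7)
The strategy is to exploit the two identities $\varphi^m(x)=x$ and $\varphi^n=\mathrm{id}$ together with a cocycle-type additive rule for $\sigma$ that is a direct consequence of Lemma~\ref{BASIC}(i). Throughout, I regard values of $\pi$ and $\sigma$ as elements of $\mathbb{Z}_n$, since $\varphi$ has order~$n$.

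The first step is to establish
\[
\sigma(x,a+b)\equiv\sigma(x,a)+\sigma(\varphi^a(x),b)\pmod{n}\qquad(a,b\ge 1).
\]
I would obtain this by computing $\varphi^{a+b}(xy)$ in two ways: directly via Lemma~\ref{BASIC}(i) with exponent $a+b$, and by first applying $\varphi^a$ and then $\varphi^b$, using Lemma~\ref{BASIC}(i) at each stage. Both computations give expressions of the form $\varphi^{a+b}(x)\cdot\varphi^{(\cdot)}(y)$; since these must agree for every $y\in A$ and $\varphi$ has order $n$, the two exponents acting on $y$ are congruent modulo $n$. Specialising to $a=jm$ and $b=m$ and using $\varphi^{jm}(x)=(\varphi^m)^j(x)=x$, a straightforward induction on $k$ yields
\[
\sigma(x,km)\equiv k\,\sigma(x,m)\pmod{n}\qquad(k\ge 1).
\]

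The \emph{moreover} clause is best handled first: applying $\varphi^n=\mathrm{id}$ to $xy$ while invoking Lemma~\ref{BASIC}(i) on the right gives $xy=x\,\varphi^{\sigma(x,n)}(y)$ for every $y\in A$, so $\varphi^{\sigma(x,n)}$ is the identity permutation and hence $\sigma(x,n)\equiv 0\pmod{n}$. Since orbit lengths of a permutation divide its order, $m\mid n$, so taking $k=n/m$ in the previous congruence gives $(n/m)\sigma(x,m)\equiv\sigma(x,n)\equiv 0\pmod{n}$, from which $\sigma(x,m)\equiv 0\pmod{m}$ follows at once. I do not expect any genuine obstacle here: the content is really just the skew-morphism identity iterated, and the only point requiring a little care is the final divisibility step, where $m\mid n$ is essential in order to descend from modulus $n$ to modulus $m$.
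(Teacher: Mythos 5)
Your argument is correct, but it runs in the opposite direction from the paper's. The paper proves the congruence modulo $m$ first: it applies Lemma~\ref{BASIC}(i) to the product $x\cdot x^{-1}=1$ with exponent $m=|O_x|$, obtaining $\varphi^{\sigma(x,m)}(x^{-1})=x^{-1}$, and then invokes Lemma~\ref{INVERSE} to conclude $|O_{x^{-1}}|=m$ and hence $m\mid\sigma(x,m)$; the \emph{moreover} clause then falls out by periodicity of the summands, $\sigma(x,n)=\tfrac{n}{m}\sigma(x,m)$. You instead prove $\sigma(x,n)\equiv0\pmod{n}$ first (from $\varphi^n=\mathrm{id}$ applied to $xy$), and descend to modulus $m$ via the cocycle identity $\sigma(x,a+b)\equiv\sigma(x,a)+\sigma(\varphi^a(x),b)$ and the divisibility $m\mid n$; your final step $(n/m)\,\sigma(x,m)\equiv0\pmod{n}\Rightarrow\sigma(x,m)\equiv0\pmod{m}$ is valid. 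The trade-off: your route avoids Lemma~\ref{INVERSE} entirely, making the proof self-contained, and the cocycle identity is a reusable tool (it is essentially the same periodicity computation the paper uses in Lemma~\ref{ORDER0}); on the other hand the cocycle identity itself is more easily obtained by just splitting the defining sum $\sum_{i=1}^{a+b}\pi(\varphi^{i-1}(x))$ rather than by your double computation of $\varphi^{a+b}(xy)$, and the paper's argument is shorter once Lemma~\ref{INVERSE} is available. Both proofs are sound.
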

 \begin{proof}
 By Lemma~\ref{BASIC}(i) we have  $1=\varphi^m(xx^{-1})=\varphi^m(x)\varphi^{\sigma(x,m)}(x^{-1})=x\varphi^{\sigma(x,m)}(x^{-1})$, so
$\varphi^{\sigma(x,m)}(x^{-1})=x^{-1}$. By Lemma~\ref{INVERSE}, $m=|O_{x^{-1}}|$, so $\sigma(x,m)\equiv0\pmod{m}$. Since $m$ divides $n$, $\sigma(x,n)=\sum_{i=1}^n\pi(\varphi^{i-1}(x))=\frac{n}{m}\sigma(x,m)\equiv0\pmod{n}$.
 \end{proof}
\begin{lemma}\label{ORDER0}\cite{Hu2012}
  Let $\varphi$  be a skew-morphism of a finite group $A$, then for any $x,y\in A$, $|O_{xy}|$ divides $\lcm(|O_x|,|O_y|)$.
\end{lemma}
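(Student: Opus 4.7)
The plan is to show directly that $\varphi^m(xy)=xy$ for $m=\lcm(|O_x|,|O_y|)$, which immediately gives $|O_{xy}|\mid m$. The natural tool is the skew-morphism expansion from Lemma~\ref{BASIC}(i),
\[
\varphi^m(xy)=\varphi^m(x)\,\varphi^{\sigma(x,m)}(y),
\]
so I need to argue that both factors on the right collapse correctly: $\varphi^m(x)=x$ on the one hand, and $\varphi^{\sigma(x,m)}(y)=y$ on the other.

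The first equality is immediate because $|O_x|$ divides $m$. For the second, the idea is to reduce $\sigma(x,m)$ to $\sigma(x,|O_x|)$ by periodicity: since $\varphi^{|O_x|}(x)=x$, the sequence $\bigl(\pi(\varphi^{i-1}(x))\bigr)_{i\ge 1}$ is periodic with period dividing $|O_x|$, so writing $m=(m/|O_x|)\cdot|O_x|$ gives
\[
\sigma(x,m)=\frac{m}{|O_x|}\,\sigma(x,|O_x|).
\]
Now Lemma~\ref{ORBIT1} supplies $\sigma(x,|O_x|)\equiv 0\pmod{|O_x|}$, so $\sigma(x,|O_x|)=|O_x|\,c$ for some integer $c$, and hence $\sigma(x,m)=mc$. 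In particular $|O_y|\mid m\mid\sigma(x,m)$, from which $\varphi^{\sigma(x,m)}(y)=y$. Combining, $\varphi^m(xy)=xy$ and the lemma follows.

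The only subtlety I anticipate is a bookkeeping one: the power function takes values in $\mathbb{Z}_n$, so one has to fix a lift to $\mathbb{Z}$ before writing $\sigma(x,m)=(m/|O_x|)\sigma(x,|O_x|)$. This is harmless because both sides of the divisibility statement that we ultimately need, $|O_y|\mid\sigma(x,m)$, are insensitive to the choice of lift: $|O_y|$ divides $n$, and $\varphi^{\sigma(x,m)}(y)$ depends only on $\sigma(x,m)\bmod|O_y|$. So the plan is simply to note this lifting convention once and then execute the two-line computation above.
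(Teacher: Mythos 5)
Your proof is correct and follows essentially the same route as the paper: expand $\varphi^m(xy)=\varphi^m(x)\varphi^{\sigma(x,m)}(y)$, use periodicity of the sequence $\pi(\varphi^{i-1}(x))$ to write $\sigma(x,m)=(m/|O_x|)\,\sigma(x,|O_x|)$, and invoke Lemma~\ref{ORBIT1} to conclude $m\mid\sigma(x,m)$ and hence $\varphi^{\sigma(x,m)}(y)=y$. Your extra remark about fixing a lift of $\pi$ to $\mathbb{Z}$ is a harmless refinement that the paper leaves implicit.
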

\begin{proof}
Denote $c=|O_x|$, $d=|O_y|$ and $l=\lcm(|O_x|,|O_y|)$. Then $l=cp=dq$ for some positive integers $p,q$.
By Lemma~\ref{BASIC}(i),  $\varphi^l(xy)=\varphi^l(x)\varphi^{\sigma(x,l)}(y)=x\varphi^{\sigma(x,l)}(y).$ We have
$\sigma(x,l)=\sum\limits_{i=1}^l\pi(\varphi^{i-1}(x))=p\sum\limits_{i=1}^{c}\pi(\varphi^{i-1}(x))=p\sigma(x,c)$. By Lemma~\ref{ORBIT1} $\sigma(x,c)\equiv0\pmod{c}$, so $\sigma(x,l)\equiv0\pmod{l}$. Hence $\varphi^l(xy)=xy$, which implies that $|O_{xy}|$ divides $l$.
\end{proof}
The first part of the following lemma was first proved in~\cite[Lemma~3.1]{ZD2016}\label{ORDER11}. For completeness we include a different proof.
\begin{lemma}
  Let $\varphi$ be a skew-morphism of a finite group $A$ of order $n$, let $\pi:A\to\mathbb{Z}_n$ be
the associated power function.  If $A=\langle x_1,\cdots,x_r\rangle$ then $n=\lcm(|O_{x_1}|,\cdots,|O_{x_r}|)$. Moreover,
for any $g\in A$, $\varphi(g)$ and $\pi(g)$ are completely determined by the action of $\varphi$ and $\pi$ on the generating
orbits $O_{x_1},\cdots,O_{x_r}$.
\end{lemma}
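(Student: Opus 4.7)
The plan is to handle the two claims in sequence, with induction on word length in the generating set as the driver of both parts.

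For the first claim, set $l=\lcm(|O_{x_1}|,\ldots,|O_{x_r}|)$. Since each $|O_{x_i}|$ divides $n=|\varphi|$, the divisibility $l\mid n$ is immediate. For the reverse inequality, I would show that $\varphi^l$ fixes every element of $A$, which forces $n\mid l$. Because each $x_i$ has finite order in $A$, every $g\in A$ is a positive word in $x_1,\ldots,x_r$, so induction on word length suffices: the base case $|O_{x_i}|\mid l$ is built into the definition of $l$, and the inductive step uses Lemma~\ref{ORDER0}, by which any $\varphi^l$-fixed $y,z$ yield $|O_{yz}|\mid\lcm(|O_y|,|O_z|)\mid l$, hence $\varphi^l(yz)=yz$.

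For the second claim I would strengthen the inductive statement: for each $g\in A$ written as a word of length $k$ in the generators, both the cyclic arrangement of the whole orbit $O_g$ under $\varphi$ and the restriction $\pi|_{O_g}$ are determined by the input data. The base case $k=1$ is exactly the given information on the generating orbits. For the inductive step, write $g=x_{i_1}h$ with $h$ a word of length $k-1$. Lemma~\ref{BASIC}(i) supplies $\varphi^j(g)=\varphi^j(x_{i_1})\,\varphi^{\sigma(x_{i_1},j)}(h)$; the exponent $\sigma(x_{i_1},j)$ is a partial sum of values of $\pi$ on $O_{x_{i_1}}$, while the factor $\varphi^{\sigma(x_{i_1},j)}(h)$ is furnished by the inductive hypothesis on $h$, since the whole orbit $O_h$ with its $\varphi$-action is known. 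Lemma~\ref{BASIC}(ii) similarly expresses $\pi(\varphi^j(g))$ as a $\sigma$-sum over $O_h$ whose summands are already known by induction.

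The main subtlety, rather than a genuine obstacle, is precisely the strengthening of the inductive hypothesis above: tracking only the single values $\varphi(g)$ and $\pi(g)$ is too weak, because the recursion $\varphi(yz)=\varphi(y)\varphi^{\pi(y)}(z)$ calls on arbitrary iterates $\varphi^k(z)$, not merely $\varphi(z)$ itself, and likewise $\pi(yz)$ depends on a $\sigma$-sum along the orbit of $z$. Once the orbit-level hypothesis is carried through, both parts reduce to straightforward bookkeeping with Lemmas~\ref{BASIC} and~\ref{ORDER0}.
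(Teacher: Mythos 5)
Your argument is correct, and for the first claim it coincides with the paper's: both sides reduce $n\mid l$ to showing that every orbit length divides $l=\lcm(|O_{x_1}|,\dots,|O_{x_r}|)$ by induction on word length via Lemma~\ref{ORDER0} (your extra remark that positive words suffice because each generator has finite order is a detail the paper glosses over). For the second claim you take a genuinely different, slightly heavier route. You peel a generator off the \emph{left}, $g=x_{i_1}h$, so the recursion $\varphi^j(g)=\varphi^j(x_{i_1})\,\varphi^{\sigma(x_{i_1},j)}(h)$ calls arbitrary iterates of $\varphi$ on the shorter word $h$, and you are then forced to strengthen the induction hypothesis to carry the whole orbit $O_h$ together with $\pi\restriction_{O_h}$. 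The paper instead peels the generator off the \emph{right}, writing $g=hx$ with $x\in\{x_1,\dots,x_r\}$: then $\varphi(g)=\varphi(h)\varphi^{\pi(h)}(x)$ and $\pi(g)\equiv\sum_{i=1}^{\pi(h)}\pi(\varphi^{i-1}(x))\pmod{n}$, so every iterate of $\varphi$ and every auxiliary value of $\pi$ is taken on the generating orbit $O_x$, which is part of the given data, and the naive hypothesis that $\varphi(h)$ and $\pi(h)$ are determined already closes the induction. Thus the ``main subtlety'' you identify --- that tracking only the single values $\varphi(g)$ and $\pi(g)$ is too weak --- is an artifact of your choice of decomposition rather than an intrinsic feature of the problem; your strengthened induction does go through and in fact yields slightly more (the entire orbit of every element is determined), at the cost of bookkeeping that the paper's right-handed decomposition avoids.
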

\begin{proof}
Since $A=\langle x_1,\cdots,x_r\rangle$, any element $g$ of $A$ can be expressed as a product of  finite length $k$ in
 the generators $x_1,\cdots, x_r$. By Lemma~\ref{ORDER0} and using induction on $k$
it can be easily proved that $|O_g|$ divides $\lcm(|O_{x_1}|,\cdots,|O_{x_r}|)$, whence $n=\lcm(|O_{x_1}|,\cdots,|O_{x_r}|)$.

Moreover, to prove the second part we use induction
on the length $k$ of $g$. If $k=1$ then $g$ is a generator of $A$, the assertion is clearly true.
Assume the assertion for words of length $k$. Then for a word $g$ of length $k+1$,
we have $g=hx$ where $h$ is a word of length $k$ in the generators and $x\in\{x_1,\cdots,x_r\}$.
Then by Lemma~\ref{BASIC}(i) and (ii), we have
\[
\varphi(g)=\varphi(hx)=\varphi(h)\varphi^{\pi(h)}(x)\quad\text{and}\quad
\pi(g)\equiv\pi(hx)\equiv\sum\limits_{i=1}^{\pi(h)}\pi(\varphi^{i-1}(x))\pmod{n}.
\]
Since $\varphi(h)$ and $\pi(h)$ are completely determined by the action of $\varphi$ and $\pi$
on the generating orbits, so are $\varphi(g)$ and $\pi(g)$, as required.
\end{proof}
\begin{lemma}\label{QUOTIENT}\cite[Lemma 3.3]{ZD2016}
  Let $\varphi$ be a skew-morphism of a finite group $A$ of order $n$, let $\pi:A\to\mathbb{Z}_n$ be
the associated power function. If $N$ is a $\varphi$-invariant normal subgroup of $A$, then
\begin{itemize}
\item[\rm(i)]$\varphi$ induces a skew-morphism $\bar\varphi$ of $\bar A=A/N$ by defining $\bar\varphi$ as $\bar\varphi(\bar x)=\overline{\varphi(x)}$
and the power function $\bar\pi:\bar A\to\mathbb{Z}_{m}$ associated with $\bar\varphi$ is determined
by $\bar\pi(\bar x)\equiv\pi(x)\pmod{m}$ where $m=|\bar\varphi|$,
\item[\rm(ii)]$\Ker\varphi N/N\leq\Ker\bar\varphi$, $\Core\varphi N/N\leq\Core\bar\varphi$ and $\Fix\varphi N/N\leq\Fix\bar\varphi$.
\end{itemize}
\end{lemma}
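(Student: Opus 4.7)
The plan is to treat parts (i) and (ii) as straightforward consequences of the defining skew-morphism identity together with the hypothesis that $N$ is a $\varphi$-invariant normal subgroup of $A$. The only nontrivial well-definedness issue concerns the induced power function $\bar\pi$, which I address using a uniqueness argument rather than a direct congruence calculation.

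For part (i), I would first verify that the formula $\bar\varphi(\bar x)=\overline{\varphi(x)}$ gives a well-defined permutation of $\bar A$. Given $n_0\in N$, the skew-morphism identity yields $\varphi(xn_0)=\varphi(x)\varphi^{\pi(x)}(n_0)$, and because $N$ is $\varphi$-invariant the factor $\varphi^{\pi(x)}(n_0)$ lies in $N$, so $\varphi(xN)\subseteq\varphi(x)N$. Since $\varphi$ is a bijection on $A$ permuting the cosets of $N$, $\bar\varphi$ is a permutation of $\bar A$ fixing the identity. The skew-morphism identity then passes to the quotient directly:
\[
\bar\varphi(\bar x\bar y)=\overline{\varphi(xy)}=\overline{\varphi(x)\varphi^{\pi(x)}(y)}=\bar\varphi(\bar x)\,\bar\varphi^{\pi(x)}(\bar y).
\]
To extract the associated power function, let $m=|\bar\varphi|$ and tentatively set $\bar\pi(\bar x)\equiv\pi(x)\pmod{m}$. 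The main obstacle is to show this is independent of the representative $x$. For this I would compare, for a fixed $\bar y$, the two expressions $\bar\varphi(\bar x\bar y)=\bar\varphi(\bar x)\bar\varphi^{\pi(x)}(\bar y)$ and $\bar\varphi(\bar x'\bar y)=\bar\varphi(\bar x')\bar\varphi^{\pi(x')}(\bar y)$ whenever $\bar x=\bar x'$; cancelling $\bar\varphi(\bar x)$ forces $\bar\varphi^{\pi(x)-\pi(x')}$ to act trivially on all of $\bar A$, hence $\pi(x)\equiv\pi(x')\pmod{m}$. Thus $\bar\pi$ is well-defined, and the displayed identity above shows it is the power function of $\bar\varphi$.

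For part (ii), each containment is immediate from part (i). If $x\in\Ker\varphi$ then $\pi(x)=1$, so $\bar\pi(\bar x)\equiv 1\pmod{m}$, giving $\bar x\in\Ker\bar\varphi$; the subgroup generated in $\bar A$ by such images is exactly $\Ker\varphi\cdot N/N$, yielding the first inclusion. Similarly, if $\varphi(x)=x$ then $\bar\varphi(\bar x)=\overline{\varphi(x)}=\bar x$, proving $\Fix\varphi\cdot N/N\leq\Fix\bar\varphi$. For the core, observe that $\Core\varphi\leq\Ker\varphi$ and $\varphi(\Core\varphi)=\Core\varphi$, so $\Core\varphi\cdot N/N$ is a $\bar\varphi$-invariant subgroup of $\bar A$ contained in $\Ker\varphi\cdot N/N\leq\Ker\bar\varphi$. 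Any $\bar\varphi$-invariant subgroup contained in $\Ker\bar\varphi$ is contained in every $\bar\varphi^{\,i}(\Ker\bar\varphi)$ and hence in the intersection $\Core\bar\varphi=\bigcap_{i=1}^m\bar\varphi^{\,i}(\Ker\bar\varphi)$, giving the last inclusion. All three containments therefore reduce to the observation that the quotient map is a group homomorphism that intertwines $\varphi$ with $\bar\varphi$, and the real content of the lemma lies in the well-definedness argument in part (i).
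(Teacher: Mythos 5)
Your proof is correct, and it is worth noting that the paper itself does not actually prove this lemma: for part (i) it simply cites \cite[Lemma~3.3]{ZD2016}, and part (ii) is dismissed as ``obvious.'' So your write-up supplies a genuine, self-contained argument where the paper defers to a reference. The substance of your proof is sound: $\varphi$-invariance of $N$ plus the skew-morphism identity gives $\varphi(xN)\subseteq\varphi(x)N$, hence a well-defined permutation $\bar\varphi$ of $\bar A$ fixing $\bar 1$; the identity descends; and your cancellation argument for the well-definedness of $\bar\pi$ is exactly the right move --- since $N$ need not lie in $\Ker\varphi$, one cannot expect $\pi(x)\equiv\pi(xn_0)\pmod{n}$, only the congruence modulo $m$, and comparing $\bar\varphi(\bar x)\bar\varphi^{\pi(x)}(\bar y)$ with $\bar\varphi(\bar x')\bar\varphi^{\pi(x')}(\bar y)$ for all $\bar y$ delivers precisely that. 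The only step you leave implicit is that $m\mid n$, which is needed for ``$\bar\pi(\bar x)\equiv\pi(x)\pmod m$'' to be meaningful as a reduction from $\mathbb{Z}_n$; this follows at once from $\bar\varphi^{\,n}(\bar x)=\overline{\varphi^{n}(x)}=\bar x$, and you should say so. Part (ii) is handled correctly: the kernel and fixed-point inclusions are immediate from (i), and your observation that any $\bar\varphi$-invariant subgroup of $\Ker\bar\varphi$ lies in every $\bar\varphi^{\,i}(\Ker\bar\varphi)$, hence in $\Core\bar\varphi$, cleanly disposes of the core inclusion (one could equally write each $x\in\Core\varphi$ as $\varphi^{i}(k_i)$ with $k_i\in\Ker\varphi$ and push through the quotient, but your route is no longer).
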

\begin{proof}
 The proof of (i) can be found in \cite[Lemma~3.3]{ZD2016} while (ii) is obvious.
\end{proof}

\section{Invariant subgroups}
Let $\varphi$ be a skew-morphism of a finite group $A$. A subset $N$ of $A$ will be called \textit{$\varphi$-invariant} if $\varphi(N)=N$. In
particular if $N$ is a subgroup of $A$ then it will be called a $\varphi$-invariant subgroup of $A$.
\begin{proposition}
Let $\varphi$ be a skew-morphism of a finite group $A$. If $M$ and $N$ are $\varphi$-invariant subsets of $A$, then so are $M\cap N$ and $MN$.
In particular, if $M$ and $N$ are $\varphi$-invariant normal subgroups of $A$, then so are $M\cap N$ and $MN$.
\end{proposition}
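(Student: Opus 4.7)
The plan is to handle the two invariance claims separately, then note that the normal-subgroup portion reduces to standard group theory combined with the invariance arguments for subsets.

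For the intersection $M\cap N$, I would use the fact that $\varphi$ is a bijection of $A$, so it commutes with set-theoretic intersections: $\varphi(M\cap N)=\varphi(M)\cap\varphi(N)$. Since $\varphi(M)=M$ and $\varphi(N)=N$ by hypothesis, the right-hand side equals $M\cap N$, which finishes this case in one line.

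For the product $MN$, the key identity is the defining relation $\varphi(xy)=\varphi(x)\varphi^{\pi(x)}(y)$ from Lemma~\ref{BASIC}. Given $x\in M$ and $y\in N$, invariance of $M$ gives $\varphi(x)\in M$. For the second factor I need the elementary but essential observation that if $\varphi(N)=N$ then $\varphi^k(N)=N$ for every integer $k\geq 0$ (trivial induction on $k$), so in particular $\varphi^{\pi(x)}(y)\in N$. Therefore $\varphi(xy)\in MN$, giving $\varphi(MN)\subseteq MN$. To upgrade this to equality I would invoke finiteness: $\varphi$ is a permutation of $A$, so $|\varphi(MN)|=|MN|$, and a subset of a finite set with the same cardinality must be the whole set. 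Hence $\varphi(MN)=MN$.

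For the \enquote{in particular} clause, I would simply observe that the hypothesis that $M$ and $N$ are normal subgroups adds no new difficulty: it is standard that $M\cap N$ is a normal subgroup of $A$, and the product of two normal subgroups is again a normal subgroup (indeed $MN=NM$ is closed under multiplication and inversion, and conjugation preserves each factor). Combining these standard facts with the $\varphi$-invariance already established in the first part yields the conclusion. The only step with any content is handling $MN$, and the mild obstacle there is remembering that $\varphi(N)=N$ iterates to $\varphi^{\pi(x)}(N)=N$; once that is in hand, the whole proposition is essentially immediate.
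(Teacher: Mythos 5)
Your proof is correct and follows essentially the same route as the paper: both arguments rest on the identity $\varphi(xy)=\varphi(x)\varphi^{\pi(x)}(y)$ together with the iterated invariance $\varphi^{k}(N)=N$. You are in fact slightly more careful than the paper, which asserts $\varphi(MN)=MN$ directly from the inclusion $\varphi(MN)\subseteq MN$ without spelling out the finiteness/cardinality step that you supply.
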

\begin{proof}
For any $y\in\varphi( M\cap N)$, there exists $x\in M\cap N$ such that $y=\varphi(x)$.
Since $M$ and $N$ are both $\varphi$-invariant, $\varphi(x)\in M$ and $\varphi(x)\in N$, so $y\in M\cap N$, whence
 $\varphi(M\cap N)=M\cap N$. Therefore $M\cap N$ is also $\varphi$-invariant. Similarly for any $y\in \varphi(MN)$, there exist
 $u\in M$ and $v\in N$ such that $y=\varphi(uv)$. We have
$y=\varphi(uv)=\varphi(u)\varphi^{\pi(u)}(v)\in \varphi(M)\varphi(N)=MN$, so $\varphi(MN)= MN$, whence $MN$ is also $\varphi$-invariant.
\end{proof}

Let $\Pi$ be a finite set of primes, a positive integer $k$ will be called a \textit{$\Pi$-number} if all
prime factors of $k$ belong to $\Pi$. For instance, if $\Pi=\{2,3\}$, then $2,6,9$ are $\Pi$-numbers,
whereas $5,10,30$ are not. We define $1$ to be a $\Pi$-number for any set $\Pi$ of primes. Moreover,
let $\varphi$ be a skew-morphism of $A$, an orbit of $\varphi$ will be called a \textit{$\Pi$-orbit} if its
length is a $\Pi$-number. Define $\Orbit^\Pi\varphi$ to be the union of all $\Pi$-orbits of $\varphi$, namely,
\[
\Orbit^\Pi\varphi=\{x\in A\mid \text{$|O_x|$ is a $\Pi$-number}\}.
\]
\begin{proposition}Let $\varphi$ be a skew-morphism of $A$, let $\Pi$ be a set of primes,
then $\Orbit^\Pi\varphi$ is a $\varphi$-invariant subgroup of $A$ containing $\Fix\varphi$.
\end{proposition}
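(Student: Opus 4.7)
\medskip

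The plan is to verify the three required properties in the order: containment of $\Fix\varphi$, $\varphi$-invariance, and closure under the subgroup operations. Each one follows directly from a lemma already established in the Preliminaries, so the proof is mostly a matter of assembling the pieces.

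\medskip

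First I would observe that for any $x\in\Fix\varphi$ the orbit $O_x=\{x\}$ has length $1$, and $1$ is a $\Pi$-number by convention. This shows $\Fix\varphi\subseteq\Orbit^\Pi\varphi$ and in particular gives us the identity element for free. Next, $\varphi$-invariance is immediate from the definition: $\Orbit^\Pi\varphi$ is by construction a union of orbits of $\varphi$, and $\varphi$ maps each such orbit to itself setwise; explicitly, for $x\in\Orbit^\Pi\varphi$ the element $\varphi(x)$ lies in $O_x$, so $O_{\varphi(x)}=O_x$ has $\Pi$-number length.

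\medskip

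For the subgroup property, since $A$ is finite it suffices to check that $\Orbit^\Pi\varphi$ is non-empty and closed under products and inverses. Non-emptiness is handled by the identity. For closure under products, take $x,y\in\Orbit^\Pi\varphi$; then $|O_x|$ and $|O_y|$ are $\Pi$-numbers, so $\lcm(|O_x|,|O_y|)$ is a $\Pi$-number, and by Lemma~\ref{ORDER0} the length $|O_{xy}|$ divides this lcm. Hence $|O_{xy}|$ is itself a $\Pi$-number, giving $xy\in\Orbit^\Pi\varphi$. For inverses, Lemma~\ref{INVERSE} yields $|O_{x^{-1}}|=|O_x|$, so $x^{-1}\in\Orbit^\Pi\varphi$.

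\medskip

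There is no substantive obstacle here; the statement is essentially a direct corollary of Lemmas~\ref{INVERSE} and~\ref{ORDER0} once one notices that the set of $\Pi$-numbers is closed under divisors and under the lcm operation. The only point meriting care is the inclusion of $1$ as a $\Pi$-number (guaranteed by the convention introduced just above the statement), which is what allows both $\Fix\varphi$ to sit inside $\Orbit^\Pi\varphi$ and the identity of $A$ to belong to it.
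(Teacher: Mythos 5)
Your proof is correct and follows essentially the same route as the paper's: non-emptiness via the fixed points (using the convention that $1$ is a $\Pi$-number), closure under products via Lemma~\ref{ORDER0} together with the fact that divisors of $\Pi$-numbers and lcms of $\Pi$-numbers are again $\Pi$-numbers, and $\varphi$-invariance from the set being a union of orbits. The only (harmless) difference is that you additionally spell out closure under inverses via Lemma~\ref{INVERSE}, whereas the paper relies on finiteness to conclude the subgroup property from closure under products alone.
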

\begin{proof}
By definition, all fixed points of $\varphi$ belong to $\Orbit^{\Pi}\varphi$, so $\Orbit^{\Pi}\varphi$ is not empty.
Moreover, for any $x,y\in \Orbit^{\Pi}\varphi$, $|O_x|$ and $|O_y|$ are $\Pi$-numbers, so $\lcm(|O_x|,|O_y|)$ is also a $\Pi$-number.
By Lemma~\ref{ORDER0}, $|O_{xy}|$ divides $\lcm(|O_x|,|O_y|)$. It follows that $|O_{xy}|$ is also a $\Pi$-number. Hence $xy\in \Orbit^{\Pi}\varphi$. Therefore, $\Orbit^{\Pi}\varphi$ is a subgroup of $A$, which is clearly $\varphi$-invariant.
\end{proof}
\begin{example}
Consider the skew-morphism of the cyclic group $\mathbb{Z}_{21}$ defined by
\[
\varphi=(0)(1,2,4,8,16,11)(3,6,12)(5,10,20,19,17,13)(7,14)(9,18,15).
\]
Then $\Orbit^{\{2\}}\varphi=\langle 7\rangle$, $\Orbit^{\{3\}}\varphi=\langle 3\rangle$,
$\Orbit^{\{5\}}\varphi=\langle 0\rangle$, and $\Orbit^{\{2,3\}}\varphi=\mathbb{Z}_{21}.$
\end{example}
In what follows we study $\varphi$-invariant subgroups via the covering of skew-morphisms.
\begin{definition}
Let $\varphi_i$ be skew-morphisms of finite groups $A_i$ $(i=1,2)$. If there is an epimorphism $\theta:A_1\to A_2$ such that for all $x\in A_1$
\[
\theta\varphi_1(x)=\varphi_2\theta(x),
\]
then $\varphi_1$ will be called a \textit{covering} (or a lift) of $\varphi_2$, and $\varphi_2$ will be called a \textit{projection} (or a quotient)
of $\varphi_1$. The covering will be denoted by $\varphi_1\to\varphi_2$, and the epimorphism $\theta:A_1\to A_2$ will be said to be associated with the covering.
\end{definition}
\begin{lemma}\label{COVER1}
Let $\varphi_i$ be skew-morphisms of finite groups $A_i$ $(i=1,2)$, let $\varphi_1\to\varphi_2$ be a covering between skew-morphisms,
and $\theta:A_1\to A_2$ the associated epimorphism. Then
\begin{itemize}
 \item[\rm(i)]every $\varphi_1$-invariant subgroup $M$ of $A_1$ projects to a $\varphi_2$-invariant subgroup $\theta(M)$ of $A_2$,
 \item[\rm(ii)]every $\varphi_2$-invariant subgroup $N$ of $A_2$ lifts to a $\varphi_1$-invariant subgroup $\theta^{-1}(N)$ of $A_1$.
\end{itemize}
\end{lemma}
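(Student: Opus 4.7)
The plan is to exploit the defining commutation relation $\theta\varphi_1 = \varphi_2\theta$ of the covering, together with the standard fact that the image (resp.\ preimage) of a subgroup under a group homomorphism is again a subgroup. Both parts are short direct verifications; no induction or use of the power function is needed. The only subtle point is ensuring equality rather than mere inclusion when lifting a $\varphi_2$-invariant subgroup in (ii), since in general $\varphi_i$ is only a bijection of sets, not a group homomorphism.

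For part (i), I would first note that $\theta(M)$ is automatically a subgroup of $A_2$ because $\theta$ is a homomorphism and $M$ is a subgroup of $A_1$. Then I apply the covering relation to every element of $M$: since $M$ is $\varphi_1$-invariant we have $\varphi_1(M)=M$, and hence
\[
\varphi_2(\theta(M)) \;=\; \theta(\varphi_1(M)) \;=\; \theta(M),
\]
so $\theta(M)$ is $\varphi_2$-invariant.

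For part (ii), the preimage $\theta^{-1}(N)$ is a subgroup of $A_1$ by standard group theory. To check $\varphi_1$-invariance, I would first show the inclusion $\varphi_1(\theta^{-1}(N))\subseteq\theta^{-1}(N)$ by chasing an element: for $x\in\theta^{-1}(N)$, the relation $\theta(\varphi_1(x))=\varphi_2(\theta(x))\in\varphi_2(N)=N$ places $\varphi_1(x)$ in $\theta^{-1}(N)$. Finally, because $A_1$ is finite and $\varphi_1$ is a bijection of $A_1$, the inclusion on the finite set $\theta^{-1}(N)$ upgrades to equality $\varphi_1(\theta^{-1}(N))=\theta^{-1}(N)$.

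The main obstacle, if any, is a conceptual one rather than a computational one: remembering that $\varphi_1$ is only a permutation and not a homomorphism, so that invariance of a preimage must be deduced via a cardinality argument on the finite set $\theta^{-1}(N)$ instead of an algebraic identity. Everything else reduces to a single application of the covering equation $\theta\varphi_1=\varphi_2\theta$.
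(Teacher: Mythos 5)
Your proof is correct and follows essentially the same route as the paper: a direct element chase using the covering relation $\theta\varphi_1=\varphi_2\theta$. You are in fact slightly more careful than the paper in addressing why the inclusion upgrades to equality (via finiteness and bijectivity of $\varphi_1$), a point the paper's proof leaves implicit.
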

\begin{proof}
(i)~For any $y\in\theta(M)$, $y=\theta(x)$ for some $x\in M$. Since $M$ is $\varphi_1$-invariant, $\varphi_1(x)\in M$,
so $\varphi_2(y)=\varphi_2\theta(x)=\theta\varphi_1(x)\in\theta(M)$, whence $\theta(M)$ is $\varphi_1$-invariant.

(ii)~For any $x\in\theta^{-1}(N)$, $y=\theta(x)\in N$. Since $N$ is $\varphi_2$-invariant, $\varphi_2(y)\in N$,
so $\theta\varphi_1(x)=\varphi_2\theta(x)=\varphi_2(y)\in N$. Hence $\varphi_1(x)\in\theta^{-1}(N)$.
\end{proof}
Since the identity subgroup, the fixed-point subgroup $\Fix\varphi_2$ and the core $\Core\varphi_2$ are all $\varphi_2$-invariant subgroups of $A_2$, by Lemma~\ref{COVER1}, the kernel $\Ker\theta=\theta^{-1}(1)$, the preimages $\theta^{-1}(\Fix\varphi_2)$ and $\theta^{-1}(\Core\varphi_2)$ are all $\varphi_1$-invariant subgroups of $A_1$. In particular, $\Ker\theta$ and $\theta^{-1}(\Core\varphi_2)$ are both normal in $A_1$.

Now we are ready to introduce a new invariant subgroup for an arbitrary skew-morphism $\varphi$. An element of $A$ will be called \textit{smooth} if
$\varphi(x)\equiv x\pmod{\Core\varphi}$. Define $\Smooth\varphi$ to be the set of smooth elements of $\varphi$ in $A$, that is,
\[
\Smooth\varphi=\{x\in A\mid \varphi(x)\equiv x\pmod{\Core\varphi}\}.
\]
\begin{proposition}\label{STABLE1}
Let $\varphi$ be a skew-morphism of a finite group $A$ of order $n$, and $\pi:A\to\mathbb{Z}_n$ be the associated power function.  Let $\bar{\varphi}$ be the induced skew-morphism of $\varphi$ on $\bar{A}=A/\Core\varphi$. Take arbitrary $x\in A$. Then the following are equivalent,
 \begin{itemize}
  \item[\rm(i)] $x\in\Smooth\varphi$,
\item[\rm(ii)] $\pi(\varphi^{i}(x))=\pi(x)$ for all nonnegative integers $i$,
 \item[\rm(iii)] $\bar{x}\in\Fix\bar{\varphi}$.
\end{itemize}
\end{proposition}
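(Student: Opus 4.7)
The plan is to reduce the three-way equivalence to two implications by noting that (i) and (iii) say the same thing after unwinding definitions. Indeed, by Lemma~\ref{QUOTIENT} the induced skew-morphism acts by $\bar\varphi(\bar x)=\overline{\varphi(x)}$, so $\bar x\in\Fix\bar\varphi$ is literally $\varphi(x)\Core\varphi=x\Core\varphi$, i.e.\ $x\in\Smooth\varphi$. It therefore remains to prove (i)$\Leftrightarrow$(ii). Throughout, write $K=\Ker\varphi$ and $c_i:=\varphi^i(x)x^{-1}$.

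For (iii)$\Rightarrow$(ii), iterate (iii) to get $\bar\varphi^i(\bar x)=\bar x$ for every $i\ge 0$, which reads $c_i\in\Core\varphi\subseteq K$. Then $K\varphi^i(x)=Kc_ix=Kx$, and Lemma~\ref{BASIC}(iv) converts this into $\pi(\varphi^i(x))=\pi(x)$.

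The main work is (ii)$\Rightarrow$(i). From (ii) together with Lemma~\ref{BASIC}(iv) one first obtains $K\varphi^i(x)=Kx$, hence $c_i\in K$ for every $i\ge 0$. The heart of the argument is the identity
\[
\varphi^j(c_i)=c_{i+j}\,c_j^{-1},\qquad i,j\ge 0,
\]
proved by induction on $j$. The base case $j=1$ expands $\varphi(c_i)$ through the skew-morphism rule as $\varphi(\varphi^i(x))\varphi^{\pi(\varphi^i(x))}(x^{-1})$, uses (ii) to rewrite the exponent as $\pi(x)$, and then substitutes $\varphi^{\pi(x)}(x^{-1})=\varphi(x)^{-1}$ (which falls out of $\varphi(x)\varphi^{\pi(x)}(x^{-1})=\varphi(xx^{-1})=1$); a short rearrangement gives $c_{i+1}c_1^{-1}$. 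The induction step exploits $c_{i+j}\in K$ to split $\varphi(c_{i+j}c_j^{-1})=\varphi(c_{i+j})\varphi(c_j^{-1})$, together with the fact that $\varphi$ restricted to $K$ is a homomorphism (so $\varphi(c_j^{-1})=\varphi(c_j)^{-1}$), and the factors telescope to $c_{i+j+1}c_{j+1}^{-1}$.

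Specialising to $i=1$ gives $\varphi^j(c_1)=c_{j+1}c_j^{-1}\in K$ for every $j$, since $K$ is a subgroup containing both $c_{j+1}$ and $c_j$. Thus the $\varphi$-orbit of $c_1$ lies entirely in $K$, which, because $\varphi$ has finite order $n$, is equivalent to $c_1\in\varphi^i(K)$ for every $i\in\{1,\dots,n\}$; hence $c_1=\varphi(x)x^{-1}\in\bigcap_{i=1}^n\varphi^i(K)=\Core\varphi$, which is (i). The main obstacle is spotting the recursion $\varphi^j(c_i)=c_{i+j}c_j^{-1}$ and carrying the induction through cleanly; once that identity is available, closure of $K$ under group operations immediately forces the orbit of $c_1$ into $K$ and delivers the result.
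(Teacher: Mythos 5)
Your proof is correct and takes essentially the same route as the paper: the easy implications are definitional unwindings via Lemma~\ref{QUOTIENT} and Lemma~\ref{BASIC}(iv), and the hard direction rests on showing that every $\varphi$-image of $c_1=\varphi(x)x^{-1}$ stays in $\Ker\varphi$, which forces $c_1\in\Core\varphi$. Your explicit recursion $\varphi^j(c_i)=c_{i+j}c_j^{-1}$ is simply a cleaner formalization of the paper's ``repeat the above process'' step in its (ii)$\Rightarrow$(iii) argument.
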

\begin{proof}
(i)$\Rightarrow$(ii). Since $x\in\Smooth\varphi$, $\varphi(x)=ux$ for some $u\in\Core\varphi$. It follows that $\varphi^{i}(x)=\varphi^{i-1}(u)\cdots\varphi(u)ux$ for all nonnegative integers $i$. Noting that $\varphi^{i-1}(u)\cdots\varphi(u)u\in\Core\varphi$, we have $\pi(\varphi^{i}(x))=\pi(x)$.

(ii)$\Rightarrow$(iii). Since $\pi(\varphi(x))=\pi(x)$, we have $\varphi(x)=ux$ for some $u\in\Ker\varphi$ and then $\varphi^{2}(x)=\varphi(ux)=\varphi(u)\varphi(x)=\varphi(u)ux$. Since $\pi(\varphi^{2}(x))=\pi(x)$, we get $\varphi(u)u\in\Ker\varphi$ and therefore $\varphi(u)\in\Ker\varphi$. Repeat the above process, we get $\varphi^{i}(u)\in\Ker\varphi$ for all positive integer $i$. It follows that $u\in\Core\varphi$ and then $\bar{\varphi}(\bar{x})=\bar{x}$, that is, $\bar{x}\in\Fix\bar{\varphi}$.

(iii)$\Rightarrow$(i). Since $\bar{x}\in\Fix\bar{\varphi}$, we have $\bar{\varphi}(\bar{x})=\bar{x}$ and then $\varphi(x)=ux$ for some $u\in\Core\varphi$. Therefore $x\in\Smooth\varphi$.
\end{proof}
The following proposition is a direct corollary of Proposition \ref{STABLE1} and the proof is omitted.
\begin{corollary} Suppose $\varphi$, $A$, $\bar\varphi$ and $\bar A$ are the same as Proposition \ref{STABLE1}. Then $\Smooth\varphi$ is a $\varphi$-invariant subgroup of $A$ and $\Fix\overline{\varphi}=\overline{\Smooth\varphi}$. In particular,
\begin{itemize}
\item[\rm(i)]$\Smooth\varphi=\Core\varphi$ if and only if $\Fix\bar\varphi=\bar1$,
\item[\rm(ii)]$\Smooth\varphi=A$ if and only if $\Fix\bar\varphi=\bar A$, and
\item[\rm(iii)]$\Smooth\varphi =\Fix\varphi$ if $\Core \varphi=1$.
\end{itemize}
\end{corollary}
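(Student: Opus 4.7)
The plan is to deduce everything from the equivalence (i)$\Leftrightarrow$(iii) of Proposition~\ref{STABLE1}, which identifies $\Smooth\varphi$ set-theoretically as the preimage of $\Fix\bar\varphi$ under the canonical epimorphism $\theta:A\to\bar A=A/\Core\varphi$. Once this preimage description is in hand, every remaining claim is a routine consequence of already-proved facts, so the corollary is indeed a near-immediate corollary of Proposition~\ref{STABLE1}.

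First I would record explicitly that $x\in\Smooth\varphi$ if and only if $\bar x\in\Fix\bar\varphi$, i.e., $\Smooth\varphi=\theta^{-1}(\Fix\bar\varphi)$. To see that $\Smooth\varphi$ is a $\varphi$-invariant subgroup of $A$, I would invoke Lemma~\ref{BASIC}(v) applied to $\bar\varphi$, which tells us that $\Fix\bar\varphi$ is a $\bar\varphi$-invariant subgroup of $\bar A$. Because $\Core\varphi$ is a $\varphi$-invariant normal subgroup of $A$, Lemma~\ref{QUOTIENT} shows that $\theta:A\to\bar A$ is an epimorphism realising a covering $\varphi\to\bar\varphi$ in the sense of the preceding definition, so Lemma~\ref{COVER1}(ii) lifts the $\bar\varphi$-invariant subgroup $\Fix\bar\varphi$ to the $\varphi$-invariant subgroup $\theta^{-1}(\Fix\bar\varphi)=\Smooth\varphi$ of $A$. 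Then $\Fix\bar\varphi=\overline{\Smooth\varphi}$ follows just by applying $\theta$ to both sides of the preimage description and using surjectivity.

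With the identity $\Fix\bar\varphi=\overline{\Smooth\varphi}$ established, statements (i)--(iii) are essentially bookkeeping. For (i), $\overline{\Smooth\varphi}=\bar 1$ is equivalent to $\Smooth\varphi\leq\Core\varphi$, and since $\Core\varphi\subseteq\Smooth\varphi$ always (every element of $\Core\varphi$ satisfies $\varphi(x)\equiv x\pmod{\Core\varphi}$, as $\Core\varphi$ is $\varphi$-invariant), this is the same as $\Smooth\varphi=\Core\varphi$. For (ii), $\overline{\Smooth\varphi}=\bar A$ is equivalent to $\Smooth\varphi\cdot\Core\varphi=A$, and again using $\Core\varphi\subseteq\Smooth\varphi$ this collapses to $\Smooth\varphi=A$. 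Finally, for (iii), if $\Core\varphi=1$ then $\bar A=A$ and $\bar\varphi=\varphi$, so the equivalence $x\in\Smooth\varphi\Leftrightarrow\bar x\in\Fix\bar\varphi$ reads $x\in\Smooth\varphi\Leftrightarrow x\in\Fix\varphi$.

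There is no real obstacle here; the only mildly subtle point is the containment $\Core\varphi\subseteq\Smooth\varphi$ used in (i) and (ii), but this is automatic from the definition of $\Smooth\varphi$ because $\Core\varphi$ is itself $\varphi$-invariant, so for any $u\in\Core\varphi$ one has $\varphi(u)\in\Core\varphi$ and hence $\varphi(u)\equiv u\pmod{\Core\varphi}$ (both sides project to $\bar 1$). This is presumably why the authors felt comfortable leaving the proof to the reader.
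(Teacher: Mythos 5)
Your proposal is correct, and it is essentially the argument the authors intend: the paper explicitly omits the proof as ``a direct corollary of Proposition~\ref{STABLE1}'', and you carry out exactly that deduction, identifying $\Smooth\varphi=\theta^{-1}(\Fix\bar\varphi)$ via the equivalence (i)$\Leftrightarrow$(iii) and then invoking Lemma~\ref{BASIC}(v) and Lemma~\ref{COVER1}(ii) for invariance, with (i)--(iii) following as you describe. The one point worth making explicit, the containment $\Core\varphi\subseteq\Smooth\varphi$, you justify correctly.
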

\begin{example}\label{EXM2}
Consider a skew-morphism of the cyclic group $\mathbb{Z}_{18}$ defined by
\begin{align*}
 \varphi&=(0)(1,15,17,7,3,5,13,9,11)(2,14,8)(4,10,16)(6)(12),\\
\pi&=[\,1\,][2,~5,~8,~2,\,5,8,\,2,\,5,\,8\,]\,[\,7,\,7,\,7\,]\,[\,4,~4,~4]\,[\,1\,][\,1\,].
\end{align*}
Then $\Core\varphi=\langle 6\rangle$, so $\bar\varphi=(\bar0)(\bar1,\bar3,\bar5)(\bar2)(\bar4)$ and $\Smooth\varphi=\langle 2\rangle$.
\end{example}

\section{Smooth skew-morphisms}
In general the kernel $\Ker\varphi$ of a skew-morphism $\varphi$ does not have to be a $\varphi$-invariant subgroup. If  $\Ker\varphi$ is $\varphi$-invariant then $\varphi$ will be called \textit{kernel-preserving}.  Clearly, $\varphi$ is kernel-preserving if and only if
$\Core\varphi=\Ker\varphi$.

The following lemma summarizes some basic properties of kernel-preserving skew-morphisms.
\begin{lemma}\label{KERNELP}
Let $\varphi$ be a kernel-preserving skew-morphism of a finite group $A$ of order $n$, let $\pi:A\to\mathbb{Z}_n$ be the associated power function,
 then
\begin{itemize}
\item[\rm(i)]$K=\Ker\varphi$ is a normal subgroup of $A$, and the restriction of $\varphi$ to $K$ is an automorphism of $K$,
\item[\rm(ii)] for some positive integer $k$ if $\mu=\varphi^k$ is a skew-morphism of $A$, then $\Ker\varphi\leq\Ker\mu$,
\item[\rm(iii)]for any automorphism $\gamma$ of $A$, $\gamma^{-1}\varphi\gamma$ is a kernel-preserving skew-morphism of $A$,
\item[\rm(iv)]for any pair of elements $x\in A$ and $u\in \Ker\varphi$ there is a unique element $v\in \Ker\varphi$
such that $xu=vx$ and $\varphi(x)\varphi^{\pi(x)}(u)=\varphi(v)\varphi(x)$. In particular if $A$ is abelian then $\pi(x)\equiv1\pmod{k}$ where $k$ is the order of the restriction of $\varphi$ to $K$.
\end{itemize}
\end{lemma}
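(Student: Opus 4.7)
The plan is to treat the four claims in order, exploiting that the kernel-preserving hypothesis is equivalent to the identity $\Ker\varphi=\Core\varphi$, so that conclusions established earlier about $\Core\varphi$ transfer automatically to $K=\Ker\varphi$.

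For (i), since $\Core\varphi$ is always normal in $A$, the equality $K=\Core\varphi$ immediately gives the normality of $K$. The $\varphi$-invariance $\varphi(K)=K$ makes $\varphi|_K$ a bijection, and for $u,v\in K$ the skew-morphism rule $\varphi(uv)=\varphi(u)\varphi^{\pi(u)}(v)$ collapses to $\varphi(uv)=\varphi(u)\varphi(v)$ because $\pi(u)=1$ by definition of the kernel; hence $\varphi|_K\in\Aut(K)$. For (iii), I would invoke Lemma~\ref{CONJ} directly: it states that conjugation by $\gamma\in\Aut(A)$ sends both $\Ker\varphi$ and $\Core\varphi$ to their $\gamma^{-1}$-images, so the equality $\Ker\varphi=\Core\varphi$ passes to $\gamma^{-1}\varphi\gamma$.

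For (ii), I would apply Lemma~\ref{POWER}. For $u\in K$ the $\varphi$-invariance of $K$ forces $\varphi^{i-1}(u)\in K$ for every $i$, hence $\pi(\varphi^{i-1}(u))=1$, and therefore $\sigma(u,k)=k$. The congruence $kt\equiv k\pmod{n}$ admits $t=1$ as a solution, whose reduction modulo $m=n/\gcd(n,k)$ is again $1$, so $\pi_\mu(u)=1$ and $u\in\Ker\mu$.

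For (iv), the normality of $K$ supplied by (i) lets me set $v=xux^{-1}\in K$, and right cancellation yields uniqueness. Applying $\varphi$ to $xu=vx$ and expanding via the skew-morphism rule, using $\pi(v)=1$ on the right-hand side, gives the asserted identity $\varphi(x)\varphi^{\pi(x)}(u)=\varphi(v)\varphi(x)$. In the abelian case $v=u$, and cancelling $\varphi(x)$ in $\varphi(x)\varphi^{\pi(x)}(u)=\varphi(u)\varphi(x)$ yields $\varphi^{\pi(x)-1}|_K=\mathrm{id}_K$ for every $u\in K$; since $\varphi|_K$ has order $k$ and $k\mid n$ (as $\varphi^n=\mathrm{id}$), this forces $\pi(x)\equiv 1\pmod{k}$. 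The only real point of care I foresee is that the congruence $\pi(x)\equiv 1\pmod{k}$ is consistent with $\pi(x)$ being only defined modulo $n$, but this is immediate from $k\mid n$; otherwise the proof is a direct chain of reductions to previously proved lemmas.
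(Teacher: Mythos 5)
Your proof is correct and follows essentially the same route as the paper's: parts (i), (iii) and (iv) are argued identically (normality via $\Ker\varphi=\Core\varphi$, Lemma~\ref{CONJ} for conjugation, and $v=xux^{-1}$ with $\pi(v)=1$ for the commuting identity). The only divergence is in (ii), where the paper simply chains $\Ker\varphi=\Core\varphi\leq\Core\mu\leq\Ker\mu$ using the final clause of Lemma~\ref{POWER}, whereas you compute $\sigma(u,k)=k$ for $u\in\Ker\varphi$ and solve $kt\equiv k\pmod{n}$ to get $\pi_\mu(u)=1$ directly; both arguments are valid, with the paper's being marginally shorter.
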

\begin{proof}(i)~Since $\varphi$ is kernel-preserving, $\Ker\varphi=\Core\varphi$, which is a normal subgroup of $A$.
Moreover, for all $x,y\in K$ we have $\varphi(xy)=\varphi(x)\varphi(y)$,  so $\varphi\restriction_K$ is an automorphism of $K$.

(ii)~Since $\varphi$ is kernel-preserving we have $\Ker\varphi=\Core\varphi$. By Lemma~\ref{POWER} $\Core\varphi\leq\Core\mu$. Since $\Core\mu\leq\Ker\mu$ we get $\Ker\varphi\leq\Ker\mu$.

(iii)~This is an immediate consequence of Lemma~\ref{CONJ}.

(iv)~Since $K\unlhd A$, for any pair $(x,u)$ of elements $x\in A$ and $u\in K$ there is a unique element $v\in K$ such that $xu=vx$.
Then $\varphi(x)\varphi^{\pi(x)}(u)=\varphi(xu)=\varphi(vx)=\varphi(v)\varphi(x).$ In particular if $A$ is abelian then $u=v$ and
$\varphi^{\pi(x)}(u)=\varphi(u)$ for all $u\in K$, so $\pi(x)\equiv1\pmod{k}$.
\end{proof}

It is well known that every skew-morphism of an abelian group is kernel-preserving~\cite[Lemma 5.1]{CJT2007}. For non-abelian simple groups we have
\begin{proposition}
Every kernel-preserving skew-morphism of a non-abelian finite simple group $A$ is an automorphism of $A$.
\end{proposition}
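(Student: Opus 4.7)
The plan is to exploit the simplicity of $A$ together with the lower bound $\Ker\varphi>1$ from Lemma~\ref{LESS}. Since $\varphi$ is kernel-preserving, part~(i) of Lemma~\ref{KERNELP} tells us that $K=\Ker\varphi=\Core\varphi$ is a normal subgroup of $A$. So the first step is simply to invoke this lemma and record that $K\trianglelefteq A$.

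Next I would apply Lemma~\ref{LESS} to the skew-morphism $\varphi$ of the non-trivial group $A$, which yields $K=\Ker\varphi>1$. Because $A$ is simple and $K$ is a non-trivial normal subgroup, the only possibility is $K=A$.

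Finally, $K=A$ means $\pi(x)=1$ for all $x\in A$, so the defining relation of a skew-morphism reduces to $\varphi(xy)=\varphi(x)\varphi(y)$ for all $x,y\in A$. Combined with the fact that $\varphi$ is a bijection fixing $1$, this shows $\varphi\in\Aut A$, completing the argument.

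There is essentially no obstacle here: the whole proof is a two-line consequence of Lemmas~\ref{KERNELP}(i) and~\ref{LESS} together with the simplicity of $A$. The only subtle point is remembering that the kernel-preserving hypothesis is exactly what is needed to promote $\Ker\varphi$ from a mere subgroup (Lemma~\ref{BASIC}(iii)) to a normal subgroup, so that simplicity can be applied. Note that the hypothesis that $A$ is non-abelian is not actually used in the proof; it only serves to make the statement non-trivial, since for cyclic simple groups $\mathbb{Z}_p$ every skew-morphism is automatically kernel-preserving (as all skew-morphisms of abelian groups are, by~\cite[Lemma 5.1]{CJT2007}) and the conclusion that $\varphi$ is an automorphism is already known.
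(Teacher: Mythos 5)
Your proof is correct and is essentially the paper's argument, just run forwards instead of by contradiction: both rest on Lemma~\ref{KERNELP}(i) (normality of the kernel), Lemma~\ref{LESS} ($\Ker\varphi>1$), and simplicity of $A$. Your closing observation that non-abelianness is not logically needed is also accurate.
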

\begin{proof}
If $\varphi$ is not an automorphism of $A$ then by Lemma~\ref{LESS} we have $1<\Ker\varphi<A$. Since $\varphi$ is kernel-preserving, by Lemma~\ref{KERNELP}(i)  $\Ker\varphi\unlhd A$, a contradiction.
\end{proof}

 Let $\varphi$ be a skew-morphism of a finite group $A$. Recall that $\Smooth\varphi$ consists of elements $x\in A$ such that $\varphi(x)\equiv x\pmod{\Core\varphi}$. If $\Smooth\varphi=A$ then $\varphi$ will be called \textit{smooth}. The concept of smooth skew-morphisms was first introduced by Hu in the unpublished manuscript~\cite{Hu2012}. It was rediscovered by Bachrat\'y and Jajcay under the name of \textit{coset-preserving} skew-morphisms~\cite{BJ2014}.
\begin{lemma}Let $\varphi$ be a skew-morphism of a finite group $A$.  If $\varphi$ is smooth then every subgroup of $A$ containing $\Core\varphi$ is $\varphi$-invariant, and in particular, $\varphi$ is kernel-preserving.
\end{lemma}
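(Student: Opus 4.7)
The plan is to exploit the definition of smoothness directly. Since $\varphi$ is smooth we have $\Smooth\varphi=A$, which by definition of $\Smooth\varphi$ means $\varphi(x)\equiv x\pmod{\Core\varphi}$ for every $x\in A$, that is, $\varphi(x)\in(\Core\varphi)x$ for every $x\in A$.

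First I would let $H$ be any subgroup of $A$ with $\Core\varphi\leq H$ and pick an arbitrary $x\in H$. By smoothness, $\varphi(x)=ux$ for some $u\in\Core\varphi$. Since $u\in\Core\varphi\leq H$ and $x\in H$, the product $ux$ lies in $H$, so $\varphi(x)\in H$. This shows $\varphi(H)\subseteq H$. Because $\varphi$ is a bijection of the finite set $A$, its restriction to $H$ is injective, and together with $\varphi(H)\subseteq H$ and $|H|<\infty$ this forces $\varphi(H)=H$. Hence $H$ is $\varphi$-invariant.

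For the ``in particular'' clause, recall from the second lemma of Section~2 that $\Core\varphi\leq\Ker\varphi$ for every skew-morphism. Thus $\Ker\varphi$ is a subgroup of $A$ containing $\Core\varphi$, and applying the statement just proved gives $\varphi(\Ker\varphi)=\Ker\varphi$, which is exactly the definition of being kernel-preserving.

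I do not foresee a real obstacle: the lemma is essentially an immediate unpacking of the definition of smoothness, together with the elementary observation that an injection from a finite set into itself is onto, and the standard containment $\Core\varphi\leq\Ker\varphi$.
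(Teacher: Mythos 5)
Your proof is correct, but it takes a more elementary route than the paper. The paper argues via the quotient: by Proposition~3.9 (the characterization of $\Smooth\varphi$), smoothness of $\varphi$ means the induced skew-morphism $\bar\varphi$ on $\bar A=A/\Core\varphi$ is the identity permutation, so every subgroup of $\bar A$ is $\bar\varphi$-invariant, and the covering lemma (Lemma~3.6(ii)) then lifts this to every subgroup of $A$ containing $\Core\varphi$. You instead unpack the congruence $\varphi(x)\equiv x\pmod{\Core\varphi}$ directly: $\varphi(x)=ux$ with $u\in\Core\varphi\leq H$ gives $\varphi(H)\subseteq H$, and injectivity on a finite set upgrades this to equality. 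Both arguments are sound and both rest on the same containment $\Core\varphi\leq\Ker\varphi$ for the ``in particular'' clause. Your version is self-contained and avoids invoking the quotient/covering machinery; the paper's version is shorter on the page because that machinery has already been set up and it situates the lemma inside the correspondence between $\varphi$-invariant subgroups of $A$ containing $\Core\varphi$ and subgroups of $A/\Core\varphi$, which is the viewpoint the rest of the section exploits. The step you make explicit --- that an injection of a finite set into itself is a bijection, so $\varphi(H)\subseteq H$ implies $\varphi(H)=H$ --- is silently absorbed into the covering lemma in the paper's treatment; spelling it out is a small but genuine gain in transparency.
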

\begin{proof}
By Proposition \ref{STABLE1} if $\varphi$ is smooth then the induced skew-morphism $\bar\varphi$ of $\bar A=A/\Core\varphi$ is the identity permutation on $\bar A$. Since every subgroup of $\bar A$ is $\bar\varphi$-invariant, it follows from Lemma~\ref{COVER1} that every subgroup of $A$ containing $\Core\varphi$ is $\varphi$-invariant. Since $\Core\varphi\leq\Ker\varphi$, $\varphi(\Ker\varphi)=\Ker\varphi$.
\end{proof}

The following lemma characterizes smooth skew-morphisms in terms of the power functions.
\begin{lemma}\label{PRESERVE}
Let $\varphi$ be a skew-morphism of a finite group $A$ of order $n$, let $\pi:A\to\mathbb{Z}_n$ be the associated power function.
 Then $\varphi$ is smooth if and only if $\pi(\varphi(x))=\pi(x)$ for all $x\in A$.
\end{lemma}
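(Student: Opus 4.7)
The plan is to invoke Proposition~\ref{STABLE1} directly, which already gives the equivalence of smoothness at a single element $x$ with the stronger condition that $\pi$ is constant along the whole forward orbit of $x$ under $\varphi$. Thus the work reduces to showing that the weak hypothesis ``$\pi(\varphi(x))=\pi(x)$ for every $x\in A$'' automatically bootstraps to the stronger condition ``$\pi(\varphi^{i}(x))=\pi(x)$ for every $x\in A$ and every $i\ge 0$.''

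For the forward direction, assume $\varphi$ is smooth, i.e.\ $\Smooth\varphi=A$. Then every $x\in A$ lies in $\Smooth\varphi$, so by the equivalence (i)$\Leftrightarrow$(ii) of Proposition~\ref{STABLE1} we have $\pi(\varphi^{i}(x))=\pi(x)$ for all nonnegative integers $i$; specializing to $i=1$ yields the desired identity $\pi(\varphi(x))=\pi(x)$.

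For the converse, suppose $\pi(\varphi(x))=\pi(x)$ for every $x\in A$. Fix $x\in A$ and apply this identity with $x$ replaced by $\varphi^{i-1}(x)$ to get $\pi(\varphi^{i}(x))=\pi(\varphi^{i-1}(x))$ for every $i\ge 1$. A trivial induction on $i$ then gives $\pi(\varphi^{i}(x))=\pi(x)$ for all $i\ge 0$, which is exactly condition (ii) of Proposition~\ref{STABLE1}. By that proposition, $x\in\Smooth\varphi$. Since $x\in A$ was arbitrary, $\Smooth\varphi=A$, so $\varphi$ is smooth.

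There is no real obstacle here: the entire content has been packaged into Proposition~\ref{STABLE1}, whose equivalence (i)$\Leftrightarrow$(ii) requires the full orbit condition rather than just one step. The only small observation needed is the elementary but essential bootstrapping argument that a one-step invariance of $\pi$ along $\varphi$ propagates to all iterates, and this is immediate by induction using the hypothesis at the point $\varphi^{i-1}(x)$.
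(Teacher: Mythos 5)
Your proof is correct. The forward direction is exactly the paper's: smoothness plus Proposition~\ref{STABLE1} gives $\pi(\varphi(x))=\pi(x)$. For the converse the two arguments diverge slightly. You bootstrap the one-step identity $\pi(\varphi(y))=\pi(y)$ (applied at $y=\varphi^{i-1}(x)$) into the full-orbit condition (ii) of Proposition~\ref{STABLE1} and then invoke the implication (ii)$\Rightarrow$(i) of that proposition; this is a clean and perfectly valid induction, and it concentrates all the real work inside Proposition~\ref{STABLE1}. The paper instead argues directly: from $\pi(\varphi(u))=\pi(u)=1$ for $u\in\Ker\varphi$ it deduces that $\Ker\varphi$ is $\varphi$-invariant, hence $\Ker\varphi=\Core\varphi$, and then applies Lemma~\ref{BASIC}(iv) to convert $\pi(\varphi(x))=\pi(x)$ into $\varphi(x)\equiv x\pmod{\Core\varphi}$. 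The paper's route has the side benefit of making explicit that the hypothesis forces $\varphi$ to be kernel-preserving, a fact the surrounding text uses; your route avoids re-deriving what is already packaged in Proposition~\ref{STABLE1}, at the cost of proving the (unneeded but harmless) stronger statement that $\pi$ is constant on every orbit before descending back to membership in $\Smooth\varphi$. Either proof is acceptable.
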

\begin{proof}If $\varphi$ is smooth then by Proposition~\ref{STABLE1},
$\pi(\varphi(x))=\pi(x)$ for all $x\in A$. Conversely, if $\pi(\varphi(x))=\pi(x)$ for all $x\in A$, then for all $u\in \Ker\varphi$ we have
$\pi(\varphi(u))=\pi(u)= 1$, so $\varphi(u)\in\Ker\varphi$, which implies that $\Ker\varphi=\Core\varphi$. Therefore by Lemma~\ref{BASIC}(iv) the condition $\pi(\varphi(x))=\pi(x)$ implies that $\varphi(x)\equiv x\pmod{\Core\varphi}$, that is, $\varphi$ is smooth.
\end{proof}
It turns out that any smooth skew-morphism $\varphi$ preserves cosets of $\Ker\varphi$ in $A$. It is this reason that smooth skew-morphisms
were also called coset-preserving skew-morphisms in \cite{BJ2014, BJ2016}.

For a skew-morphism $\varphi$ of $A$, the smallest positive integer $p$ such that $\pi(\varphi^p(x))=\pi(x)$ is called
the \textit{periodicity} of $\varphi$. Periodicity of skew-morphisms was introduced as a tool to study skew-morphisms of abelian groups~\cite{BJ2016}. The following theorem is a generalization of the results obtained in~\cite{BJ2016}.
\begin{theorem}\label{MAIN1}
Let $\varphi$ be a kernel-preserving skew-morphism of a finite group $A$ of order $n$ with $\pi$ being the associated power function, let $\bar\varphi$
be the induced skew-morphism of $\bar A=A/\Ker\varphi$ of order $m$ by $\varphi$, then
\begin{itemize}
\item[\rm(i)] $m$ is equal to the periodicity of $\varphi$, and in particular $m$ divides $n$,
\item[\rm(ii)] if $\varphi$ is non-trivial, then $\mu=\varphi^m$ is also non-trivial,
\item[\rm(iii)]$\mu=\varphi^m$ is a smooth skew-morphism of $A$ of order $n/m$, and in particular $\mu$ is
an automorphism of $A$ if and only if $\sigma(x,m)\equiv m\pmod{n}$ for all $x\in A$,
\item[\rm(iv)]$\bar \varphi$ is smooth if and only if $\pi(\varphi(x))\equiv \pi(x)\pmod{m}$ for all $x\in A$,
\item[\rm(v)]if $\bar x\in\Ker\bar\varphi$ then $\pi(x)\equiv1\pmod{m}$, and in
particular $\bar\varphi$ is an automorphism of $\bar A=A/K$ if and only if $\pi(x)\equiv1\pmod{m}$ for all $x\in A$.
\end{itemize}
\end{theorem}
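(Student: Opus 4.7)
The proof breaks into five parts, and the unifying idea is to translate every condition between the skew-morphism $\varphi$ on $A$ and its induced quotient $\bar\varphi$ on $\bar A = A/K$, where $K = \Ker\varphi = \Core\varphi$ because $\varphi$ is kernel-preserving. The two tools that enable the translation are Lemma~\ref{QUOTIENT}(i), giving $\bar\pi(\bar x) \equiv \pi(x) \pmod m$, and Lemma~\ref{BASIC}(iv), giving $\pi(x) = \pi(y)$ iff $Kx = Ky$. Parts (i), (iv), (v) are essentially direct consequences of these translations.

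For (i), the identity $\pi(\varphi^p(x)) = \pi(x)$ is equivalent, by Lemma~\ref{BASIC}(iv), to $K\varphi^p(x) = Kx$, i.e.\ $\bar\varphi^p(\bar x) = \bar x$; so the smallest $p$ that works for all $x$ equals $|\bar\varphi| = m$, and $m\mid n$ by Lemma~\ref{QUOTIENT}. For (iv), smoothness of $\bar\varphi$ via Lemma~\ref{PRESERVE} reads $\bar\pi(\bar\varphi(\bar x)) = \bar\pi(\bar x)$, which by Lemma~\ref{QUOTIENT}(i) becomes the stated congruence $\pi(\varphi(x)) \equiv \pi(x) \pmod m$. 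For (v), $\bar x \in \Ker\bar\varphi$ is the definition $\bar\pi(\bar x) = 1$, equivalently $\pi(x) \equiv 1 \pmod m$; the automorphism criterion follows because an automorphism of $\bar A$ is exactly a skew-morphism of $\bar A$ whose power function is identically $1$.

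The technical part is (iii). To apply Lemma~\ref{POWER} with $k = m$, I must verify that $mt \equiv \sigma(x,m)\pmod n$ is solvable, which, since $m\mid n$, reduces to $m\mid \sigma(x,m)$. Periodicity is what buys this: because $\pi\circ\varphi^m = \pi$, the sequence $\pi(\varphi^{i-1}(x))$ has period dividing $m$, and therefore
\[
\sigma(x,n)=\sum_{i=1}^{n}\pi(\varphi^{i-1}(x))=\frac{n}{m}\,\sigma(x,m).
\]
Combined with $\sigma(x,n)\equiv 0\pmod n$ from Lemma~\ref{ORBIT1}, this gives $m\mid\sigma(x,m)$. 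Hence $\mu = \varphi^m$ is a skew-morphism of order $n/m$, with $\pi_\mu(x)\equiv \sigma(x,m)/m\pmod{n/m}$. Smoothness of $\mu$ then follows from $\sigma(\varphi^m(x),m)=\sigma(x,m)$ (periodicity again), while the automorphism criterion $\pi_\mu\equiv 1$ unpacks to $\sigma(x,m)\equiv m\pmod n$.

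Finally, (ii) falls out of (v): if $\mu=\varphi^m$ were trivial, then $n\mid m$ and hence $m=n$, so (v) forces $\pi(x)=1$ for every $x$ with $\bar x\in\Ker\bar\varphi$, whence $\Ker\bar\varphi = \bar 1$. Lemma~\ref{LESS} then makes $\bar A$ trivial, so $A=K$, $\varphi$ is an automorphism of $A$, $\bar\varphi$ is trivial, and $m=1=n$, contradicting the non-triviality of $\varphi$. The principal obstacle is packaged in part~(iii): the solvability of the congruence defining $\varphi^m$ as a skew-morphism is exactly what periodicity and Lemma~\ref{ORBIT1} together provide, and getting that bookkeeping right is the crux of the whole theorem.
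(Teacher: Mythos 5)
Your proof is correct, and for parts (i), (iii) and (iv) it follows essentially the same route as the paper: the periodicity is identified with $|\bar\varphi|$ via Lemma~\ref{BASIC}(iv), the solvability of $mt\equiv\sigma(x,m)\pmod{n}$ is extracted from $\sigma(x,n)=\frac{n}{m}\sigma(x,m)\equiv0\pmod{n}$ together with Lemma~\ref{ORBIT1}, and the smoothness claims are translated through Lemma~\ref{PRESERVE} exactly as in the text. The genuine divergences are in (v) and, more substantially, (ii). For (v) you read off $\Ker\bar\varphi=\{\bar x\mid\bar\pi(\bar x)=1\}$ and $\bar\pi(\bar x)\equiv\pi(x)\pmod{m}$ from Lemma~\ref{BASIC}(iii) and Lemma~\ref{QUOTIENT}(i), which in fact yields the stronger ``if and only if''; the paper instead recomputes $\bar\varphi(\bar x\bar y)$ directly to conclude $\bar\varphi^{\pi(x)-1}(\bar y)=\bar y$. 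For (ii) the paper argues in one line from the inequality $|A:\Ker\varphi|<|\varphi|$ for non-trivial $\varphi$ (a fact it does not justify on the spot; it holds because a non-trivial skew-morphism never takes the power value $0$, so $\pi$ assumes at most $n-1$ values and Lemma~\ref{BASIC}(iv) bounds the index), whereas you run a contradiction through (v) and Lemma~\ref{LESS}: triviality of $\varphi^m$ forces $m=n$, hence $\Ker\bar\varphi=\bar1$, hence $\bar A=\bar1$ and $m=1=n$. Your version avoids invoking that unproved inequality at the cost of making (ii) logically depend on (v); both arguments are sound.
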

\begin{proof}
(i)~Let $p$ be the periodicity of $\varphi$. Then for all $x\in A$ we have $\pi(\varphi^p(x))=\pi(x)$,
so $\varphi^p(x)=ux$ for some $u\in K:=\Ker\varphi$, or equivalently $\bar\varphi^p(\bar x)=\bar x$, which implies
that $m\leq p$. On the other hand,
since $|\bar\varphi|=m$, for any $x\in A$,  $\bar\varphi^m(\bar x)=\bar x$, so
there is an element $u\in K$ such that $\varphi^m(x)=ux$. Hence $\pi(\varphi^m(x))=\pi(ux)= \pi(x)$.
The minimality of $p$ then implies that $p\leq m$.

(ii)~If $\varphi$ is non-trivial, then $|A:\Ker\varphi|<|\varphi|=n$. By Lemma~\ref{LESS} $m=|\bar\varphi|\leq |\bar A|=|A:\Ker\varphi|$, so $m$ is a proper
divisor of $n$, whence $\varphi^m$ is non-trivial.

(iii)~By (i) for each $x\in A$ we have
\[
\sigma(x,n)=\sum_{i=1}^n\pi(\varphi^{i-1}(x))=\frac{n}{m}\sum_{i=1}^m\pi(\varphi^{i-1}(x))=\frac{n}{m}\sigma(x,m)\pmod{n}.
\]
By Lemma~\ref{ORBIT1} $\sigma(x,n)=0\pmod{n},$ so $\sigma(x,m)\equiv0\pmod{m}.$ Hence
 by Lemma~\ref{POWER} $\mu=\varphi^m$ is a skew-morphism of $A$ with its power function determined
 by $\pi_\mu(x)\equiv \sigma(x,m)/m\pmod{n/m}$. By (i), $\pi(\mu(x))=\pi(\varphi^m(x))=\pi(x)$,
so $\pi_\mu(\mu(x))=\pi_\mu(x)$ whence $\mu$ is smooth.

(iv)~ By Lemma \ref{PRESERVE}, $\bar\varphi$ is smooth if and only if $\bar\pi(\bar\varphi(\bar x))=\bar\pi(\bar x)$ for all $x\in A$,
 or equivalently $\pi(\varphi(x))\equiv\pi(x)\pmod{m}$.

 (v)~If $\bar x\in\Ker\bar\varphi$, then for all $y\in A$ we have
\[
\overline{\varphi(x)\varphi^{\pi(x)}(y)}=\overline{\varphi(xy)}=\bar\varphi(\bar x\bar y)
=\bar\varphi(\bar x)\bar\varphi(\bar y)=\overline{\varphi(x)\varphi(y)},
\]
so $\overline{\varphi^{\pi(x)}(y)}=\overline{\varphi(y)}$, and hence $\bar\varphi^{\pi(x)-1}(\bar y)=\bar y$.
Therefore $\pi(x)\equiv1\pmod{m}$.
\end{proof}
\begin{example}\label{EXM3}
 Consider a skew-morphism of the cyclic group $\mathbb{Z}_{18}$ given by
\begin{align*}
\varphi&=(0)(1,~5,13,11,7,17)(2,16,8,10,14,4)(3,5)(6,12)(9),\\
\pi&=[\,1\,][3,~5,~3~,5~,3,~5~][5,~3,~5,~3,~5,~3][\,1,1\,][1,~1]\,[\,1\,].
\end{align*}
Then $\Ker\varphi=\langle 3\rangle$ and $\bar\varphi=(\bar 0)(\bar 1,\bar 2).$
The periodicity of $\varphi$ is $2$, which is precisely the order of $\bar\varphi$. Since  $\sigma(x,2)\equiv0\pmod{2}$, by Theorem~\ref{MAIN1}(iii), $\mu=\varphi^2$
 is an automorphism of $A$.
\end{example}

The following theorem summarizes the most important properties of smooth skew-morphisms, see also \cite{BJ2014,Hu2012}.
\begin{theorem}\label{MAIN2}
 Let $\varphi$ be a smooth skew-morphism of $A$ of order $n$, let $\pi:A\to\mathbb{Z}_n$ be the associated power function. Then
\begin{itemize}
\item[\rm(i)] $\pi:A\to\mathbb{Z}_n$ is a group homomorphism from $A$ to the multiplicative
group $\mathbb{Z}_n^*$ with $\Ker\pi=\Ker\varphi$,
\item[\rm(ii)]for any $\varphi$-invariant normal subgroup $N$ of $A$, the induced skew-morphism $\bar\varphi$
on $A/N$ is also smooth, in particular, if $N=\Ker\varphi$ then $\bar\varphi$ is the identity permutation,
\item[\rm(iii)]for any positive integer $k$, $\mu=\varphi^k$ is a smooth skew-morphism,
\item[\rm(iv)]for any automorphism $\gamma$ of $A$, $\psi=\gamma^{-1}\varphi\gamma$ is a smooth skew-morphism of $A$.
\end{itemize}
\end{theorem}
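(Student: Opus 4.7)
My plan is to exploit Lemma~\ref{PRESERVE}, which reduces smoothness to the pointwise identity $\pi\circ\varphi=\pi$. Combined with Lemma~\ref{BASIC}(ii), this identity collapses the cocycle-type sums $\sigma(x,k)=\sum_{i=1}^{k}\pi(\varphi^{i-1}(x))$ that usually obstruct direct calculations with skew-morphisms, and each of the four statements then becomes a short substitution.

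For part (i), I would rewrite $\pi(xy)\equiv\sigma(y,\pi(x))\pmod{n}$ via Lemma~\ref{BASIC}(ii). Smoothness forces every summand $\pi(\varphi^{i-1}(y))$ to equal $\pi(y)$, hence $\pi(xy)\equiv\pi(x)\pi(y)\pmod{n}$. Setting $y=x^{-1}$ and using $\pi(1)=1$ gives $\pi(x)\pi(x^{-1})\equiv 1\pmod{n}$, so the image lands in $\mathbb{Z}_n^*$; and $\Ker\pi=\Ker\varphi$ is just the definition of the kernel.

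For part (ii), set $m=|\bar\varphi|$. Lemma~\ref{QUOTIENT}(i) realizes $\bar\varphi$ as a skew-morphism whose power function satisfies $\bar\pi(\bar x)\equiv\pi(x)\pmod{m}$. Smoothness of $\varphi$ then yields $\bar\pi(\bar\varphi(\bar x))\equiv\pi(\varphi(x))=\pi(x)\equiv\bar\pi(\bar x)\pmod{m}$, and Lemma~\ref{PRESERVE} applied to $\bar\varphi$ gives smoothness of the quotient. For the special case $N=\Ker\varphi$, I would first note that, by Lemma~\ref{PRESERVE}, smoothness of $\varphi$ already implies $\Ker\varphi=\Core\varphi$, which is normal and $\varphi$-invariant; the defining condition $\varphi(x)\equiv x\pmod{\Core\varphi}$ then forces $\bar\varphi(\bar x)=\bar x$ for every $\bar x$, so $\bar\varphi$ is the identity permutation.

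For part (iii), smoothness collapses $\sigma(x,k)$ to $k\pi(x)$, trivializing the solvability condition in Lemma~\ref{POWER} and showing that $\mu=\varphi^k$ is a skew-morphism with $\pi_\mu(x)\equiv\pi(x)\pmod{n/\gcd(n,k)}$. Its smoothness then follows from $\pi_\mu(\mu(x))\equiv\pi(\varphi^k(x))=\pi(x)\equiv\pi_\mu(x)$, where the middle equality iterates $\pi\circ\varphi=\pi$. For part (iv), Lemma~\ref{CONJ} already provides $\psi=\gamma^{-1}\varphi\gamma$ as a skew-morphism whose power function factors through an automorphism of $A$; since $\psi^i=\gamma^{-1}\varphi^i\gamma$, direct substitution reduces $\pi_\psi\circ\psi=\pi_\psi$ to $\pi\circ\varphi=\pi$, and Lemma~\ref{PRESERVE} concludes. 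The only step requiring genuine care is (iii), where one must confirm that $t=\pi(x)$ is the correct representative in $\mathbb{Z}_{n/\gcd(n,k)}$ of $\pi_\mu(x)$; but this is immediate once $\sigma(x,k)=k\pi(x)$ is substituted into Lemma~\ref{POWER}.
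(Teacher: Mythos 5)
Your proposal is correct and follows essentially the same route as the paper: both reduce smoothness to the pointwise identity $\pi\circ\varphi=\pi$ (Lemma~\ref{PRESERVE}/Proposition~\ref{STABLE1}), collapse $\sigma(x,k)$ to $k\pi(x)$, and then substitute into Lemma~\ref{BASIC}(ii), Lemma~\ref{QUOTIENT}, Lemma~\ref{POWER} and Lemma~\ref{CONJ} exactly as the paper does. The only cosmetic difference is in (iv), where you verify $\pi_\psi\circ\psi=\pi_\psi$ while the paper checks $\psi(x)\equiv x\pmod{\Core\psi}$ directly; these are equivalent by Lemma~\ref{PRESERVE}, and you merely spell out a few details (the inverse relation $\pi(x)\pi(x^{-1})\equiv1\pmod{n}$ and the identity quotient when $N=\Ker\varphi$) that the paper leaves implicit.
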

\begin{proof}
By Proposition \ref{STABLE1}, $\pi(\varphi^i(x))=\pi(x)$ for all nonnegative integers $i$. Then by Lemma~\ref{BASIC}(ii)
$\pi(xy)\equiv\sum\limits_{i=1}^{\pi(x)}\pi(\varphi^{i-1}(y))\equiv\pi(x)\pi(y)\pmod n$. Therefore $\pi$ is a group homomorphism from $A$ to the multiplicative group $\mathbb{Z}_n^*$.

(ii)~Since $\varphi$ is smooth, we have $\pi(\varphi(x))=\pi(x)$ and then $\bar\pi(\bar\varphi(\bar x))=\bar\pi(\bar x))$ where $m=|\bar\varphi|$. By Lemma~\ref{PRESERVE}, $\bar\varphi$ is smooth.

(iii)~Recalling that $\pi(\varphi^i(x))=\pi(x)$ for all $i$,
we get \[\sigma(x,k)=\sum\limits_{i=1}^{k}\pi(\varphi^{i-1}(x))\equiv k\pi(x)\pmod n\] for any positive integer $k$, which implies the equation
$kt\equiv\sigma(x,k)\pmod{n}$ is solvable for all $x\in A$. Therefore by Lemma~\ref{POWER}
$\mu=\varphi^k$ is a skew-morphism of $A$ and the associated power function $\pi_\mu:A\to\mathbb{Z}_m$ is given by
$\pi_\mu(x)\equiv \pi(x)\pmod{m}$ where $m=n/\gcd(n,k)$ is the order of $\mu$. Since
$\pi_\mu(\mu(x))\equiv\pi(\varphi^k(x))\equiv\pi(x)\equiv\pi_\mu(x)\pmod{m}$,
by Lemma~\ref{PRESERVE} $\mu$ is also smooth.

(iv)~By Lemma~\ref{BASIC}(viii), $\psi=\gamma^{-1}\varphi\gamma$ is a skew-morphism
with $\Core\psi=\gamma^{-1}(\Core\varphi)$. Since $\varphi$ is smooth and $\gamma$ is an automorphism,
for all $x\in A$ we have $\varphi(\gamma(x))\equiv\gamma(x)\pmod{\Core\varphi}$, so
$\gamma^{-1}\varphi\gamma(x)\equiv x\pmod{\gamma^{-1}(\Core\varphi)}$,
that is, $\psi(x)\equiv x\pmod{\Core\psi}$. Therefore $\psi$ is also smooth.
\end{proof}

%%%%%%%%%%%%%%%%%%%%%%%%%%%%%%%%%%%%%%%%%%%%%%%%%%%%%%%%%%%%
%%%%%%%%%%%%%%%%%%%%%%%%%%%%%%%%%%%%%%%%%%%%%%%%%%%%%%%%%%%%
%%%%%%%%%%%%%%%%%%%%%%%%%%%%%%%%%%%%%%%%%%%%%%%%%%%%%%%%%%%%
\section{Smooth skew-morphisms of the dihedral groups}
 Throughout this section $D_n$ will denote the dihedral group of order $2n$ given by the presentation
\begin{align}\label{DIH}
D_n=\langle a,b\mid a^n=b^2=1, b^{-1}ab=a^{-1}\rangle,\quad n\geq 3.
\end{align}

The following lemma determines the normal subgroups of $D_n$.
\begin{lemma}\label{DIHNORMAL}
Let $K$ be a proper normal subgroup of $D_n$, $n\geq 3$. Then
\begin{itemize}
\item[\rm(i)]if $n$ is odd then $K=\langle a^u\rangle$ where $u$ divides $n$,
\item[\rm(ii)]if $n$ is even then either $K=\langle a^2,b\rangle$, $K=\langle a^2,ab\rangle$
or $K=\langle a^u\rangle$ where $u$ divides $n$.
\end{itemize}
\end{lemma}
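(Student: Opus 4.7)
My plan is to reduce the classification to analysing the intersection $K\cap\langle a\rangle$, which is a subgroup of the cyclic group $\langle a\rangle$, hence of the form $\langle a^u\rangle$ with $u\mid n$, and then separately understanding how much of the reflection coset $\langle a\rangle b$ can lie in $K$, exploiting the normality of $K$ via conjugation by $a$.

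First I would dispose of the easy case $K\subseteq\langle a\rangle$: here $K=\langle a^u\rangle$ for some $u\mid n$, which is the conclusion in (i) and one of the options in (ii). Next I would assume $K\not\subseteq\langle a\rangle$, so $K$ contains some reflection $a^{i}b$. Using the relation $ba=a^{-1}b$, a single conjugation gives $a(a^{i}b)a^{-1}=a^{i+2}b\in K$, and multiplying by the involution $(a^{i}b)^{-1}=a^{i}b$ yields $a^{2}\in K$. Hence $K\cap\langle a\rangle\supseteq\langle a^{2}\rangle$, i.e.\ the exponent $u$ of $K\cap\langle a\rangle=\langle a^{u}\rangle$ must divide $\gcd(2,n)$.

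If $n$ is odd, then $\gcd(2,n)=1$ forces $u=1$, so $\langle a\rangle\leq K$, and together with a reflection this gives $K=D_{n}$, contradicting properness. This completes (i). If $n$ is even, then $u\in\{1,2\}$; the case $u=1$ again yields $K=D_{n}$, so $u=2$ and $K\cap\langle a\rangle=\langle a^{2}\rangle$. Since $[D_{n}:\langle a\rangle]=2$, the subgroup $K$ has at most two cosets modulo $\langle a^{2}\rangle$, and the non-trivial coset is precisely $a^{i}b\langle a^{2}\rangle=\{a^{i+2j}b:j\in\mathbb{Z}\}$. Thus every reflection in $K$ has exponent of the same parity as $i$: if $i$ is even then $K=\langle a^{2},b\rangle$, and if $i$ is odd then $K=\langle a^{2},ab\rangle$, yielding the remaining two possibilities in (ii).

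The argument is largely a tracking exercise, and the only subtle point is the parity dichotomy: one must observe that once $K\cap\langle a\rangle=\langle a^{2}\rangle$, the conjugation orbit $\{a^{i+2j}b\}$ of a single reflection already fills the whole reflection coset of $K$, so the two options $\langle a^{2},b\rangle$ and $\langle a^{2},ab\rangle$ genuinely are the only outcomes and are mutually exclusive (they share only $\langle a^{2}\rangle$). A brief verification that each of these candidates is indeed normal in $D_{n}$ (which follows because $b^{-1}a^{2}b=a^{-2}\in\langle a^{2}\rangle$ and conjugation permutes reflections within the same parity class) rounds off the proof.
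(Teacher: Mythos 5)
Your proof is correct and follows essentially the same route as the paper: both arguments use normality (conjugation of a reflection by $a$) to force $a^2\in K$, conclude that $n$ must be even, and then read off $K$ from the parity of the reflection's exponent. The only cosmetic difference is that you pin down $K$ by the index computation $[K:K\cap\langle a\rangle]=[D_n:\langle a\rangle]=2$, whereas the paper first shows $K=\langle a^s,a^vb\rangle$ via the division algorithm and then invokes maximality of $\langle a^2,a^vb\rangle$; your version is slightly leaner but substantively identical.
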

\begin{proof}
First note that all elements of $D_n$ can be written as the form $a^u$ or $a^vb$ where $0\leq u,v<n$.
If $K$ contains no elements of the form $a^vb$ then $K=\langle a^u\rangle$ for some $u$ dividing $n$.
It is clear that all such subgroups are normal in $D_n$.

On the other hand if $K$ contains an element $x=a^vb$, then since $\langle x\rangle\not\trianglelefteq D_n$,
 $K$ must contain another element $y\not\in \langle x\rangle$. If $y=a^ub$ then
$y=a^{u}b=(a^{v}b)^{-1}a^{v-u}=x^{-1}a^{v-u}.$ Hence without loss of generality we may assume that $y=a^s$
where $s$ is the smallest positive integer such that $a^s\in K$.

We proceed to show that $K=\langle a^s,a^vb\rangle$. For any $z\in K$, we have $z=a^k$ or $z=a^kb$ for some integer $k$.  If $z=a^k$, then by the division algorithm there are
two integers $q,r$ such that $k=sq+r$ where $0\leq r<s$. It follows that $a^r=a^{k-sq}=a^k(a^s)^{-q}\in K$.
By the minimality of $s$ we have $r=0$, so $a^k\in\langle a^s,a^vb\rangle$. On the other hand, if
$z=a^kb$ then $a^{k-v}=a^kba^vb\in K$, so as the former case we have $a^{k-v}\in \langle a^s,a^vb\rangle$. Hence
$z=a^{k}b=a^{k-v}(a^vb)^{-1}\in \langle a^s,a^vb\rangle$. Therefore $K=\langle a^s,a^vb\rangle$.

Since $K\unlhd D_n$, $[a,a^vb]\in K$. We have $[a,a^vb]=[a,b]=a^{-2}$, so $a^{-2}\in K$, whence
$\langle a^2,a^vb\rangle\leq K$. If $n$ is odd then
$\langle a\rangle=\langle a^2\rangle$, so $K=\langle a,a^vb\rangle=D_n$, contrast to the
assumption that $K<D_n$. Therefore $n$ is even. Since $\langle a^2,a^vb\rangle$ is a
maximal subgroup of $D_n$, we have $K=\langle a^2,a^vb\rangle$. If $v$ is even, then
$K=\langle a^2,a^vb\rangle=\langle a^2,b\rangle$. If $v=2v'+1$ is odd, then
$a^vb=a^{2v'+1}b=a^{2v'}(ab)\in\langle a^2,ab\rangle$, so
$K=\langle a^2,a^vb\rangle=\langle a^2,ab\rangle$, as claimed.
\end{proof}
\begin{lemma}\cite{CJT2016}\label{MAX}
 Let $\varphi$ be a skew-morphism of $D_n$ where $n\geq 3$, then $\Ker\varphi\neq\langle a\rangle$.
\end{lemma}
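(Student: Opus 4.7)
The plan is to suppose for contradiction that $\Ker\varphi=K:=\langle a\rangle$ and derive $\pi(b)=1$, which contradicts $b\notin\Ker\varphi$. Since $\pi(x)=1$ for every $x\in K$, the defining identity $\varphi(xy)=\varphi(x)\varphi^{\pi(x)}(y)$ reduces on $K$ to $\varphi(xy)=\varphi(x)\varphi(y)$, so $\varphi\restriction_K$ is an injective homomorphism $K\to D_n$. Therefore $\varphi(a)$ has order $n$. For $n\geq 3$ the only elements of $D_n$ of order $n$ are the generators of $\langle a\rangle$, since every element of $D_n\setminus\langle a\rangle$ is an involution; hence $\varphi(a)\in\langle a\rangle$ and $\varphi(K)=\langle\varphi(a)\rangle=K$. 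Thus $\varphi$ is kernel-preserving, and consequently $\varphi(Kb)=Kb$ as well.

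Write $\varphi(a)=a^r$ with $\gcd(r,n)=1$ and $\varphi(b)=a^sb$. Let $d$ denote the multiplicative order of $r$ modulo $n$ (so $d=|O_a|$), set $m:=|O_b|$, and let $t:=\pi(b)\in\mathbb{Z}_N$ with $N=|\varphi|$. Since $D_n=\langle a,b\rangle$, the generating-orbits lemma immediately preceding Lemma~\ref{QUOTIENT} gives $N=\lcm(d,m)$; moreover $t\neq 1$ because $b\notin\Ker\varphi$.

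I would then extract two congruences on $t-1$. First, compute $\varphi(ba)$ in two ways: using $ba=a^{-1}b$ together with $\pi(a^{-1})=1$ yields $\varphi(ba)=\varphi(a^{-1})\varphi(b)=a^{-r+s}b$, while the skew-morphism identity gives $\varphi(ba)=\varphi(b)\varphi^t(a)=a^sb\cdot a^{r^t}=a^{s-r^t}b$; comparing forces $r^{t-1}\equiv 1\pmod n$, and hence $d\mid t-1$. Second, $1=\varphi(b^2)=\varphi(b)\varphi^t(b)$ forces $\varphi^t(b)=\varphi(b)^{-1}$, and since $\varphi(b)=a^sb$ is an involution in $D_n$ one has $\varphi(b)^{-1}=\varphi(b)$; thus $\varphi^{t-1}$ fixes $\varphi(b)\in O_b$, so $m\mid t-1$.

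Combining these, $N=\lcm(d,m)$ divides $t-1$, which together with $0\leq t<N$ forces $t=1$---the desired contradiction. The step requiring care is the first one: ruling out that $\varphi$ could send $K$ to a different subgroup of order $n$ in $D_n$. By Lemma~\ref{DIHNORMAL} there are two additional index-two subgroups when $n$ is even, but both are dihedral of degree $n/2$ and contain no element of order $n$ (for $n\geq 3$), so the injective homomorphism $\varphi\restriction_K$ is forced to land in $\langle a\rangle$. After that, the rest is a short formal manipulation of the skew-morphism identity combined with the orbit lemmas already established in the paper.
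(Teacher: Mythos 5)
Your argument is correct. Note, however, that the paper does not prove this lemma at all: it is stated with a citation to \cite{CJT2016} and no proof is given, so there is no in-paper argument to compare against. What you have produced is a self-contained elementary proof using only tools already established in the paper: the assumption $\Ker\varphi=\langle a\rangle$ makes $\varphi\restriction_{\langle a\rangle}$ an injective homomorphism, order considerations force $\varphi(\langle a\rangle)=\langle a\rangle$ (your observation that every element of $D_n\setminus\langle a\rangle$ is an involution already suffices here; the appeal to Lemma~\ref{DIHNORMAL} is unnecessary), and then the two computations of $\varphi(ba)$ and of $\varphi(b^2)=1$ yield $d\mid t-1$ and $m\mid t-1$ respectively, so that the generating-orbit lemma gives $N=\lcm(d,m)\mid t-1$ and hence $\pi(b)=1$, contradicting $b\notin\Ker\varphi$. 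All the individual steps check out: $\varphi^j(a)=a^{r^j}$ follows from $\varphi$ restricting to an automorphism of $\langle a\rangle$, the identity $ba^k=a^{-k}b$ justifies both coset computations, and $\varphi^{t-1}$ fixing a point of $O_b$ does force $m\mid t-1$. The benefit of your route is that it removes the dependence on the external reference and uses nothing beyond Lemma~\ref{BASIC} and the unnumbered generating-orbit lemma; the cost is nil, since the argument is short. This would be a reasonable proof to include in place of the bare citation.
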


\begin{lemma}\label{CLASSLEM}
 Let $\varphi$ be a smooth skew-morphism of $D_n$, $n\geq 3$. If $n$ is odd,
then $\varphi$ is an automorphism of $A$, whereas if $n$ is even and $\varphi$
is not an automorphism of $D_n$ then $\Ker\varphi=\langle a^2\rangle$,
 $\Ker\varphi=\langle a^2,ab\rangle$ or $\Ker\varphi=\langle a^2,b\rangle$.
In particular, in the latter two cases $\varphi$ is a smooth skew-morphism with $\Ker\varphi=\langle a^2,b\rangle$
if and only if $\gamma^{-1}\varphi\gamma$ is a smooth skew-morphism with $\Ker\varphi=\langle a^2,ab\rangle$ where
$\gamma:a\mapsto a,b\mapsto ab$ is an automorphism of $D_n$.
\end{lemma}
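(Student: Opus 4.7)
My approach centres on the simple observation, given by Theorem~\ref{MAIN2}(i), that for a smooth skew-morphism $\varphi$ the power function $\pi$ is a group homomorphism from $D_n$ to the abelian group $\mathbb{Z}_n^*$ with $\Ker\pi=\Ker\varphi$. In particular $K:=\Ker\varphi$ is a normal subgroup of $D_n$ (which we also get from the fact that smooth implies kernel-preserving) and the quotient $D_n/K$ is abelian. The entire argument then reduces to inspecting the normal subgroup lattice of $D_n$ supplied by Lemma~\ref{DIHNORMAL} and keeping only those $K$ for which $D_n/K$ is abelian.

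For odd $n$, Lemma~\ref{DIHNORMAL}(i) says the only proper normal subgroups are $\langle a^u\rangle$ with $u\mid n$, and the corresponding quotient is isomorphic to $D_u$. Abelianness forces $u\in\{1,2\}$; since $u\mid n$ with $n$ odd we must have $u=1$, giving $K=\langle a\rangle$, which is excluded by Lemma~\ref{MAX}. Therefore the only remaining possibility is $K=D_n$, i.e.\ $\varphi$ is an automorphism. For even $n$, the same reasoning eliminates all $\langle a^u\rangle$ with $u>2$ (since $D_u$ is then non-abelian) and eliminates $u=1$ by Lemma~\ref{MAX}, leaving $\langle a^2\rangle$ from the cyclic family; the two extra normal subgroups $\langle a^2,b\rangle$ and $\langle a^2,ab\rangle$ both have index $2$, so their quotients are automatically abelian, and they survive. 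Together with the excluded automorphism case $K=D_n$, this produces exactly the three listed kernels.

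For the final assertion I would first verify directly from the presentation \eqref{DIH} that the assignment $\gamma:a\mapsto a$, $b\mapsto ab$ respects the relations $a^n=(ab)^2=1$ and $(ab)^{-1}a(ab)=a^{-1}$, so $\gamma\in\Aut D_n$. Lemma~\ref{CONJ} then gives that $\psi=\gamma^{-1}\varphi\gamma$ is a skew-morphism with $\Ker\psi=\gamma^{-1}(\Ker\varphi)$, and Theorem~\ref{MAIN2}(iv) guarantees that $\psi$ is again smooth. A short calculation shows $\gamma^{-1}(b)=a^{-1}b$, hence
\[
\gamma^{-1}(\langle a^2,b\rangle)=\langle a^2,a^{-1}b\rangle=\langle a^2,ab\rangle,
\]
because $a^{-1}b=a^{-2}(ab)$ and $ab=a^2(a^{-1}b)$. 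Since $\gamma$ has order dividing the cardinality of $\Aut D_n$ and the correspondence $\varphi\mapsto\gamma^{-1}\varphi\gamma$ is clearly a bijection between smooth skew-morphisms with kernel $\langle a^2,b\rangle$ and those with kernel $\langle a^2,ab\rangle$, the equivalence follows.

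The proof is essentially a bookkeeping exercise once one has the abelian-quotient criterion from Theorem~\ref{MAIN2}(i) in hand, so no real obstacle arises; the only point requiring a bit of care is making sure each candidate kernel is tested both for normality (via Lemma~\ref{DIHNORMAL}) and for the abelianness of the quotient, and that Lemma~\ref{MAX} is applied to remove the unavoidable exception $\langle a\rangle$.
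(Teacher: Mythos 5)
Your proposal is correct and follows essentially the same route as the paper: both rest on Theorem~\ref{MAIN2}(i) (that $\pi$ is a homomorphism onto a subgroup of the abelian group $\mathbb{Z}_{|\varphi|}^*$ with kernel $\Ker\varphi$), combined with Lemma~\ref{DIHNORMAL}, Lemma~\ref{MAX}, and conjugation by $\gamma$ via Lemma~\ref{CONJ} and Theorem~\ref{MAIN2}(iv). The only cosmetic difference is that you phrase the constraint as ``$D_n/\Ker\varphi$ is abelian'' and test each normal subgroup, while the paper states the equivalent condition $D_n'\leq\Ker\varphi$ and computes the derived subgroup directly.
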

\begin{proof}
 Assume that $\varphi$ is not an automorphism of $A$
then $1<\Ker\varphi<D_n$. Since $\varphi$ is smooth, by Theorem~\ref{MAIN2}(i) the power
function $\pi:D_n\to \mathbb{Z}_{|\varphi|}^*$ is a group homomorphism,
with $\Ker\pi=\Ker\varphi$. It follows that $\Ker\varphi$ is a nontrivial proper normal subgroup of $A$.
Since $\mathbb{Z}_{|\varphi|}^*$ is abelian, $D_n'\leq\Ker\varphi$ where $D_n'$ is the derived subgroup of $D_n$.

 If $n$ is odd then $D_n'=\langle a\rangle$ which is a
maximal subgroup of $D_n$. By Lemma~\ref{MAX} $\Ker\varphi\neq\langle a\rangle$, so $\Ker\varphi=D_n$, and hence
$\varphi$ is automorphism of $D_n$, a contradiction.

On the other hand if $n$ is even then $D_n'=\langle a^2\rangle$, so $\langle a^2\rangle\leq\Ker\varphi$. By Lemma~\ref{DIHNORMAL}
$\Ker\varphi\leq \langle a\rangle$, or
 $\Ker\varphi=\langle a^2,b\rangle$, or $\Ker\varphi=\langle a^2,ab\rangle$. For the first case,
 by lemma~\ref{MAX} $\Ker\varphi\neq\langle a\rangle$,
so $\Ker\varphi=\langle a^2\rangle$. For the latter two cases every smooth skew-morphism with kernel $\langle a,b\rangle$
 is conjugate to a skew-morphism with kernel $\langle a,ab\rangle$ by the automorphism $\gamma:a\mapsto a, b\mapsto ab$, as claimed.
\end{proof}
The following result classifies smooth skew-morphisms of the dihedral groups $D_n$ with $\Ker\varphi=\langle a^2\rangle$ for even integer $n\geq 4$.
\begin{theorem}\label{CLASS1}
Let $D_n$ be the dihedral group of order $2n$ where $n\geq 4$ is an even number.
Then every smooth skew-morphism of $D_n$ with $\Ker\varphi=\langle a^2\rangle$ is
 defined by
\begin{align}\label{SKEW1}
\begin{cases}
\varphi(a^{2i})=a^{2iu},\\
\varphi(a^{2i+1})=a^{2iu+2r+1},\\
\varphi(a^{2i}b)=a^{2iu+2s},\\
\varphi(a^{2i+1}b)=a^{2iu+2r+2s\sigma(u,e)+1}b
\end{cases}
\quad\text{and}\quad
\begin{cases}
\pi(a^{2i})=1,\\
\pi(a^{2i+1})=e,\\
\pi(a^{2i}b)=f,\\
\pi(a^{2i+1}b)=ef,
\end{cases}
\end{align}
where $r,s,u,k,e,f$ are nonnegative integers satisfying the following conditions
\begin{itemize}
 \item[\rm(i)]$r,s\in\mathbb{Z}_{n/2}$ and $u\in\mathbb{Z}_{n/2}^*$,
 \item[\rm(ii)]$k$ is the order of $\varphi$, which is the smallest positive integer such that
$r\sigma(u,k)\equiv0\pmod{n/2}$ and $s\sigma(u,k)\equiv0\pmod{n/2}$  where $\sigma(u,k)=\sum\limits_{i=1}^ku^{i-1},$
 \item[\rm(iii)] $e,f\in\mathbb{Z}_k^*$ such that $e\not\equiv1\pmod{k}$, $f\not\equiv1\pmod{k}$, $ef\not\equiv1\pmod{k}$, $e^2\equiv1\pmod{k}$
and $f^2\equiv1\pmod{k}$,
\item[\rm(iv)] $u^{e-1}\equiv1\pmod{n/2}$  and $u^{f-1}\equiv1\pmod{n/2}$,
 \item[\rm(v)]$r\sigma(u,e-1)\equiv u-2r-1\pmod{n/2}$,
\item[\rm(vi)]$s\sigma(u,f-1)\equiv 0\pmod{n/2}$,
\item[\rm(vii)]$r\sigma(u,f-1)+s\sigma(u,e-1)\equiv u-2r-1\pmod{n/2}$.
\end{itemize}
\end{theorem}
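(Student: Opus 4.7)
The plan is to extract the form (\ref{SKEW1}) and conditions (i)--(vii) from an arbitrary smooth skew-morphism $\varphi$ with $\Ker\varphi=\langle a^2\rangle$, and then to verify that any parameter set satisfying those conditions conversely defines such a $\varphi$.

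First I would exploit smoothness. By Theorem~\ref{MAIN2}(i) the power function $\pi:D_n\to\mathbb{Z}_k^*$ (with $k=|\varphi|$) is a group homomorphism whose kernel equals $\Ker\varphi=\langle a^2\rangle$, so $\pi$ factors as a faithful representation of the Klein four quotient $D_n/\langle a^2\rangle$. Setting $e=\pi(a)$, $f=\pi(b)$ and $\pi(ab)=ef$ yields the displayed values of $\pi$ on cosets, the distinctness $e,f,ef\not\equiv 1\pmod k$, and $e^2\equiv f^2\equiv 1\pmod k$, i.e.\ condition~(iii). Since a smooth skew-morphism is kernel-preserving, $\varphi$ restricts to an automorphism of $\langle a^2\rangle\cong\mathbb{Z}_{n/2}$, so $\varphi(a^2)=a^{2u}$ with $u\in\mathbb{Z}_{n/2}^*$; coset preservation then forces $\varphi(a)=a^{2r+1}$ and $\varphi(b)=a^{2s}b$ for some $r,s\in\mathbb{Z}_{n/2}$, establishing~(i). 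A direct induction that writes $a^{2i+1}=a^{2i}\cdot a$, $a^{2i}b=a^{2i}\cdot b$, $a^{2i+1}b=a^{2i+1}\cdot b$ and applies the skew-morphism identity delivers the four expressions in~(\ref{SKEW1}) and the orbit formulas $\varphi^j(a)=a^{2r\sigma(u,j)+1}$ and $\varphi^j(b)=a^{2s\sigma(u,j)}b$. Since $\{a,b\}$ generates $D_n$, the lemma stating $|\varphi|=\lcm(|O_{x_1}|,\dots,|O_{x_r}|)$ for a generating set then yields condition~(ii).

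Conditions (iv)--(vii) will come from translating the defining relations of $D_n$ through the skew-morphism identity and Lemma~\ref{KERNELP}(iv). Applying Lemma~\ref{KERNELP}(iv) to the pairs $(a,a^2)$ and $(b,a^2)$ produces $\varphi(a)\varphi^e(a^2)=\varphi(a^2)\varphi(a)$ and $\varphi(b)\varphi^f(a^2)=\varphi(a^{-2})\varphi(b)$, each of which collapses (using $\varphi|_{\langle a^2\rangle}(a^2)=a^{2u}$) to $u^e\equiv u$ and $u^f\equiv u$ modulo $n/2$, hence to~(iv). Computing $\varphi(b\cdot b)=\varphi(1)=1$ directly gives $s(\sigma(u,f)-1)\equiv 0\pmod{n/2}$; rewriting $\sigma(u,f)-1=u\sigma(u,f-1)$ and cancelling the unit $u$ yields~(vi). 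Equating the two expressions for $\varphi(a\cdot a)$ and $\varphi(a^2)=a^{2u}$ gives $r(1+\sigma(u,e))\equiv u-1\pmod{n/2}$; expanding $1+\sigma(u,e)=2+u\sigma(u,e-1)$ and invoking~(iv) to replace $u\sigma(u,e-1)$ by $\sigma(u,e-1)$ (since $(u-1)\sigma(u,e-1)=u^{e-1}-1\equiv 0$) produces~(v). Finally, equating $\varphi(b\cdot a)$ with $\varphi(a^{-1}\cdot b)$ gives the asymmetric relation $u\bigl(r\sigma(u,f-1)+s\sigma(u,e-1)\bigr)\equiv u-2r-1\pmod{n/2}$, from which~(vii) drops out after~(iv) removes the prefactor $u$ on the left.

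For the converse I would define $\varphi$ and $\pi$ by~(\ref{SKEW1}), observe that the formulas are well defined on the $2n$ elements of $D_n$ whenever the parameters satisfy~(i), and check the skew-morphism axiom $\varphi(xy)=\varphi(x)\varphi^{\pi(x)}(y)$ by case analysis on the cosets of $x$ and $y$ in $D_n/\langle a^2\rangle$. Smoothness and the fact that $\pi$ is constant on cosets reduce the sixteen instances to comparisons of $a$-exponents, and in every case the required congruence is one of~(iv)--(vii), possibly in combination; this shows sufficiency. The main obstacle I anticipate is precisely this sixteen-case verification: the entries for $a^{2i}b$ and $a^{2i+1}b$ in~(\ref{SKEW1}) are asymmetric (the factor $\sigma(u,e)$ appears only in the odd row), so the products $a^{2i+\epsilon}\cdot a^{2j+\delta}b$ and $a^{2i+\epsilon}b\cdot a^{2j+\delta}b$ for $\epsilon,\delta\in\{0,1\}$ must be expanded separately, and one has to keep careful track of the repeated applications of~(iv) used to absorb stray factors of $u$ inside the $\sigma(u,\cdot)$ polynomials.
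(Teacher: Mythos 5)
Your proposal is correct and follows essentially the same route as the paper: use smoothness (Theorem~\ref{MAIN2}) to pin down $\varphi(a)$, $\varphi(b)$, $\varphi(a^2)$ and the homomorphism $\pi$, derive the orbit formulas by induction, read off the order from the generating orbits, and extract (iv)--(vii) from the relations $a\cdot a^2=a^2\cdot a$, $ba^2=a^{-2}b$, $a\cdot a=a^2$, $b\cdot b=1$ and the commuting of $a$ and $b$, with the converse left as a routine verification. The only differences are cosmetic (you reduce via $\sigma(u,j)=1+u\sigma(u,j-1)$ and the identity $ba=a^{-1}b$ where the paper uses $\sigma(u,j)=\sigma(u,j-1)+u^{j-1}$ and $ab=ba^{-1}$), and both yield the same congruences.
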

\begin{proof}
 By Theorem~\ref{MAIN2}, the induced skew-morphism $\bar\varphi$ on $D_n/\Ker\varphi$ is the identity permutation, so
  there exist integers $r,s\in\mathbb{Z}_{n/2}$ such that
\[
\varphi(a)=a^{1+2r}\quad\text{and}\quad \varphi(b)=a^{2s}b.
\]
Since $\varphi$ is kernel-preserving, the restriction of $\varphi$ to $\Ker\varphi=\langle a^2\rangle$ is an automorphism, so
$\varphi(a^2)=a^{2u}$ where $u\in\mathbb{Z}_{n/2}^*$. Assume that $\pi(a)\equiv e\pmod{k}$ and $\pi(b)\equiv f\pmod{k}$
where $k=|\varphi|$.

Using induction it is easy to show that for any positive integer $j$,
 \[
 \varphi^j(a)=a^{1+2r\sigma(u,j)},\quad  \varphi^j(b)=a^{2s\sigma(u,j)}b
 \]
where
\[
\sigma(u,j)=\sum_{i=1}^ju^{i-1}.
\]
Since $D_n=\langle a,b\rangle$, the order $k=|\varphi|$ is equal to $\lcm(|O_a|,|O_b|)$, the least common
multiple of the lengths of the orbits containing $a$ and $b$, or equivalently the smallest positive integer $k$
 such that $\varphi^k(a)=a$ and $\varphi^k(b)=b$. Using the above formula we then deduce that $k$ is
the smallest positive integer such that $r\sigma(u,k)\equiv0\pmod{n/2}$ and $s\sigma(u,k)\equiv0\pmod{n/2}.$

Now we determine the skew-morphism and the associated power function. By the assumption we have
 $\varphi(a^{2i})=\varphi((a^2)^{i})=(a^{2u})^{i}=a^{2iu}$ and
$\varphi(a^{2i}b)=\varphi(a^{2i})\varphi(b)=a^{2iu+2s}b$. Similarly,
$\varphi(a^{2i+1})=\varphi(a^{2i}a)=\varphi(a^{2i})\varphi(a)=a^{1+2r+2iu}$
and $\varphi(a^{2i+1}b)=\varphi(a^{2i})\varphi(a)\varphi^e(b)=a^{2iu+1+2r+2s\sigma(u,e)}$.
 Since $\pi:D_n\to\mathbb{Z}_k^*$ is a group homomorphism, we have $e^2\equiv \pi(a)^2=\pi(a^2)\equiv 1\pmod{k}$ and $e^2\equiv\pi(a)^2\equiv\pi(a^2)\equiv1\pmod{k},$  so $e^2\equiv1\pmod{k}$ and $f^2\equiv1\pmod{k}$. Hence $\pi(a^{2i})\equiv1$, $\pi(a^{2i+1})\equiv e$, $\pi(a^{2i}b)\equiv f$,
$\pi(a^{2i+1}b)\equiv ef$. In particular, since $|D_n:\Ker\varphi|=4$ is equal to the number of distinct
values of the power function, we have $e\not\equiv f\pmod{k}$, $e\not\equiv 1\pmod{k}$ and $f\not\equiv 1\pmod{k}$.
Therefore $\varphi$ and $\pi$ have the form given by Eq.~\eqref{SKEW1}.

Moreover, since $\varphi(a)\varphi^{e}(a^2)=\varphi(a)\varphi^{\pi(a)}(a^2)=\varphi(aa^2)=\varphi(a^{2}a)
=\varphi(a^2)\varphi(a),$ we get $a^{1+2r+2u^e}=\varphi(a)\varphi^{e}(a^2)=\varphi(a^2)\varphi(a)=a^{1+2r+2u}.$
Hence $u^{e-1}\equiv1\pmod{n/2}$. Similarly, since $\varphi(b)\varphi^{f}(a^2)=\varphi(b)\varphi^{\pi(b)}(a^2)=\varphi(ba^2)
=\varphi(a^{-2}b)=\varphi(a^{-2})\varphi(b),$ we obtain $a^{2s-2u^f}b=a^{2s}ba^{2u^f}=\varphi(b)\varphi^{f}(a^2)=\varphi(a^{-2})\varphi(b)=a^{2s-2u}b.$
Hence $u^{f-1}\equiv1\pmod{n/2}$.

Furthermore,  since $a^{2u}=\varphi(a^2)=\varphi(a)\varphi^{\pi(a)}(a)=\varphi(a)\varphi^e(a)=a^{2+2r+2r\sigma(u,e)},$ we get
\begin{align}\label{SEQ1}
r(1+\sigma(u,e))\equiv u-1\pmod{n/2}.
\end{align}
Similarly $1=\varphi(b^2)=\varphi(b)\varphi^{\pi(b)}(b)
=\varphi(b)\varphi^{f}(b)=a^{2s}ba^{2s\sigma(u,f)}b=a^{2s-2s\sigma(u,f)},$ we obtain
\begin{align}\label{SEQ2}
s\sigma(u,f)\equiv s\pmod{n/2}.
\end{align}

Employing induction it is easy to derive $\varphi^j(a^{-1})=a^{1-2u^j+2r\sigma(u,j)}$ where $j$ is an arbitrary positive integer. Then $\varphi(a)\varphi^e(b)=\varphi(ab)=\varphi(ba^{-1})=\varphi(b)\varphi^f(a^{-1})$. Upon substitution we get $a^{1+2r+2s\sigma(u,e)}b
=\varphi(a)\varphi^e(b)=\varphi(b)\varphi^f(a^{-1})=a^{2s}ba^{1-2u^f+2r\sigma(u,f)}=a^{2s-1+2u^f-2r\sigma(u,f)}b.$ Hence
$r\sigma(u,f)+s\sigma(u,e)\equiv s+u^f-r-1\pmod{n/2}.$ Since $u^f\equiv u\pmod{n/2}$ the congruence is reduced to
\begin{align}\label{SEQ3}
r\sigma(u,f)+s\sigma(u,e)\equiv s+u-r-1\pmod{n/2}.
\end{align}
Recall that $u^{e-1}\equiv 1\pmod{n/2}$ and $u^{f-1}\equiv1\pmod{n/2}$, so $\sigma(u,e)\equiv\sigma(u,e-1)+1\pmod{n/2}$
and $\sigma(u,f)\equiv \sigma(u,f-1)+1\pmod{n/2}$. Upon substitution the congruences \eqref{SEQ1}, \eqref{SEQ2} and \eqref{SEQ3} are
reduced to (v), (vi) and (vii), respectively.

Conversely, for any quintuple $(r,s,u,e,f)$ of nonnegative integers
satisfying the stated numerical conditions, it is straightforward
to verify that $\varphi$ given by Eqn.~\eqref{SKEW1}
is a smooth skew-morphism of $D_n$ of order $k$ with
$\Ker\varphi=\langle a^2\rangle$ and the function $\pi$  is the associated power function. We leave it as an exercise to the reader.
\end{proof}

\begin{remark}In Theorem~\ref{CLASS1}, consider the particular case where $u=1$. By (ii) we have
\[
k=\lcm(\frac{n/2}{\gcd(r,n/2)},\frac{n/2}{\gcd(s,n/2)}).
\]
The numerical conditions are reduced to
\[
\begin{cases}
 e^2\equiv1\pmod{k},\\
f^2\equiv1\pmod{k},\\
r(e+1)\equiv0\pmod{n/2},\\
s(f-1)\equiv0\pmod{n/2},\\
r(f+1)+s(e-1)\equiv0\pmod{n/2},
\end{cases}
\]
where $r,s\in\mathbb{Z}_{n/2}$, $e,f\in\mathbb{Z}_{k}$ such that $e\not\equiv1\pmod{k}$, $f\not\equiv1\pmod{k}$ and $ef\not\equiv1\pmod{k}$.
 If $n=8m$ where $m\geq 3$ is an odd number, then it can be easily verified that the quintuple $(r,s,u,e,f)=(m+4,m, 1, 4m-1, 2m-1)$
fulfil the numerical conditions. Therefore we obtain an infinite family of skew-morphisms of $D_{8m}$ of order $4m$
with $\Ker\varphi=\langle a^2\rangle$. This example
was first discovered by Zhang and Du in \cite[Example 1.4]{ZD2016}.
\end{remark}

The following theorem classifies smooth skew-morphisms of the dihedral group $D_n$ with $\Ker\varphi=\langle a^2,b\rangle$ where $n\geq8$ is even.
\begin{theorem}\label{CLASS2}
 Let $D_n$ be the dihedral group of order $2n$ where $n\geq 8$ is an even number. If $\varphi$ is a
smooth skew-morphism of $D_n$ with $\Ker\varphi=\langle a^2,b\rangle$ then $\varphi$ belongs to one
of the following two families of skew-morphisms:
\begin{itemize}
 \item[\rm(I)]skew-morphisms of order $k$ defined by
\begin{align}\label{SKEW2}
 \begin{cases}
 \varphi(a^{2i})=a^{2iu},\\
\varphi(a^{2i+1})=a^{(2i+1)u+2r+1},\\
\varphi(ba^{2i})=ba^{2iu+2s},\\
\varphi(ba^{2i+1})=ba^{2r+2s+2iu+1}
\end{cases}
\quad\text{and}\quad
\begin{cases}
 \pi(a^{2i})=1,\\
\pi(a^{2i+1})=e,\\
\pi(ba^{2i})=1,\\
\pi(ba^{2i+1})=e,
\end{cases}
\end{align}
where $r,s,u,k,e$ are nonnegative integers satisfying the following conditions
\begin{itemize}
\item[\rm(i)]$r,s\in\mathbb{Z}_{n/2}$, $u\in\mathbb{Z}_{n/2}^*$ such that $u-1-2r\not\equiv0\pmod{n/2}$,
\item[\rm(ii)] $k$ is the smallest positive integer such that $r\sigma(u,k)\equiv0\pmod{n/2}$
and $s\sigma(u,k)\equiv0\pmod{n/2}$ where $\sigma(u,k)=\sum\limits_{i=1}^ku^{i-1}$,
\item[\rm(iii)]$e\in\mathbb{Z}_k^*$ such that $e\not\equiv1\pmod{k}$, $e^2\equiv1\pmod{k}$ and $u^{e-1}\equiv1\pmod{n/2}$,
 \item[\rm(iv)]$r\sigma(u,e-1)\equiv u-2r-1\pmod{n/2}$,
\item[\rm(v)] $s\sigma(u,e-1)\equiv -u+2r+1\pmod{n/2}.$
\end{itemize}
\item[\rm(II)]skew-morphisms of order $2(e-1)$ defined by
\begin{align}\label{SKEW3}
  \begin{cases}
\varphi(a^{2i})=a^{2iu},\\
\varphi(a^{2i+1})=ba^{2r-2iu+1},\\
\varphi(ba^{2i})=ba^{2s+2iu},\\
\varphi(ba^{2i+1})=a^{2r-2s-2iu+1}
\end{cases}
\quad\text{and}\quad
\begin{cases}
 \pi(a^{2i})=1,\\
\pi(a^{2i+1})=e,\\
\pi(ba^{2i})=1,\\
\pi(ba^{2i+1})=e,
\end{cases}
\end{align}
where $r,s,u,e$ are nonnegative integers satisfying the following conditions
\begin{itemize}
\item[\rm(i)]$r,s\in\mathbb{Z}_{n/2}$, $u\in\mathbb{Z}_{n/2}^*$ and $e>1$ is an odd number,
 \item[\rm(ii)]$u^{e-1}\equiv-1\pmod{n/2}$,
 \item[\rm(iii)]$s\sigma(u,e-1)\equiv u+2r+1\pmod{n/2}$ where $\sigma(u,e-1)=\sum\limits_{i=1}^{e-1}u^{i-1}$,
\item[\rm(iv)]$r\xi(u,e-1)\equiv s\zeta(u,e-1)-1\pmod{n/2}$ where $\xi(u,e-1)=\sum\limits_{i=1}^{e-1}(-u)^{i-1}$ and
$\zeta(u,e-1)=\sum\limits_{i=1}^{(e-1)/2}u^{2(i-1)}$.
\end{itemize}
\end{itemize}
\end{theorem}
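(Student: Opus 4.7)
The plan is to parallel the proof of Theorem~\ref{CLASS1}, now with $b$ lying in $\Ker\varphi$ and $\varphi(a)$ possibly either a rotation or a reflection. First, since $K:=\langle a^2,b\rangle$ has index $2$ in $D_n$ and $\varphi$ is smooth, Theorem~\ref{MAIN2}(i) gives a homomorphism $\pi:D_n\to\mathbb{Z}_k^*$ whose kernel is $K$, so $\pi$ takes exactly two values: $1$ on $K$ and some $e$ on $aK$ with $e\not\equiv 1$ and $e^2\equiv 1\pmod k$. This yields the $\pi$-values recorded in both families.

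Next I would describe $\varphi\restriction_{K}$. By Lemma~\ref{KERNELP}(i) it is an automorphism of $K\cong D_{n/2}$, and since $n\geq 8$ the rotation subgroup $\langle a^2\rangle$ is characteristic in $K$. Hence $\varphi(a^2)=a^{2u}$ with $u\in\mathbb{Z}_{n/2}^*$, and the induced automorphism of $K/\langle a^2\rangle\cong\mathbb{Z}_2$ is trivial, so $\varphi(b)=ba^{2s}$ for some $s\in\mathbb{Z}_{n/2}$. Smoothness then forces $\varphi(a)\in aK=\{a^{2j+1},\,ba^{2j+1}:j\in\mathbb{Z}_{n/2}\}$, splitting the analysis into Case (I) with $\varphi(a)$ a rotation and Case (II) with $\varphi(a)$ a reflection. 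In each case the explicit formulas in~\eqref{SKEW2} and~\eqref{SKEW3} follow by induction on exponents from the skew-morphism identity $\varphi(xy)=\varphi(x)\varphi^{\pi(x)}(y)$ together with the $\pi$-values above.

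The numerical conditions emerge from the consistency of $\varphi$ on the defining relations of $D_n$. Comparing $\varphi(a\cdot a^2)$ with $\varphi(a^2\cdot a)$ yields $u^{e-1}\equiv 1\pmod{n/2}$ in Case (I) and $u^{e-1}\equiv -1\pmod{n/2}$ in Case (II); the sign flip reflects the fact that $\varphi(a)$ swaps rotations and reflections in the latter case. The identity $\varphi(a^2)=\varphi(a)\varphi^e(a)$ supplies condition~(iv) of Case (I) and condition~(iv) of Case (II), while $\varphi(a\cdot b)=\varphi(b\cdot a^{-1})$ together with $\varphi(b^2)=1$ supplies condition~(v) of Case (I) and condition~(iii) of Case (II). The order $k$ is characterised as the smallest positive integer with $\varphi^k(a)=a$ and $\varphi^k(b)=b$: in Case (I) this gives condition~(ii), whereas in Case (II) the alternation of $\varphi^j(a)$ between rotations and reflections, governed by $u^{e-1}\equiv-1$, forces $k=2(e-1)$.

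The main obstacle is Case (II). Since $\varphi(a)$ is now a reflection, the iterates $\varphi^j(a)$ require separate formulas according to the parity of $j$, and tracking them leads to the auxiliary sums $\xi(u,e-1)=\sum_{i=1}^{e-1}(-u)^{i-1}$ and $\zeta(u,e-1)=\sum_{i=1}^{(e-1)/2}u^{2(i-1)}$ appearing in condition~(iv). Finally the converse direction—verifying that any tuple satisfying the listed conditions yields a well-defined smooth skew-morphism of $D_n$ with kernel $K$—is a lengthy but routine parity-based case check, which I would carry out along the same lines as the converse in Theorem~\ref{CLASS1}.
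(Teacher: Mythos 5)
Your proposal follows essentially the same route as the paper: the same use of Theorem~\ref{MAIN2} to pin down $\pi$ and the form of $\varphi$ on $a$, $b$, $a^2$, the same split into the cases $\varphi(a)=a^{2r+1}$ versus $\varphi(a)=ba^{2r+1}$, and the same defining relations producing the numerical conditions and the order computation (including $k=2(e-1)$ in the reflection case). The only substantive difference is that you justify $\varphi(b)=ba^{2s}$ and $\varphi(a^2)=a^{2u}$ via the characteristic subgroup $\langle a^2\rangle$ of $K\cong D_{n/2}$, a small point the paper leaves implicit; otherwise the argument is the paper's.
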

\begin{proof}
By Theorem~\ref{MAIN2} the induced skew-morphism $\bar\varphi$ of $D_n/\Ker\varphi$ is the identity
and the restriction of $\varphi$ to $\Ker\varphi=\langle a^2,b\rangle$ is an
automorphism of $\Ker\varphi$. It follows that there exist integers $r,s,u\in\mathbb{Z}_{n/2}$
and $l\in\mathbb{Z}_2$ such that
\[
\varphi(a)=b^la^{1+2r},\quad \varphi(b)=ba^{2s}\quad\text{and}\quad \varphi(a^2)=a^{2u}.
\]
Assume that $\pi(a)\equiv e\pmod{k}$ where $k=|\varphi|$ denotes the order of $\varphi$.
 Since $b\in\Ker\varphi$, $\pi(b)\equiv1\pmod{k}$ . By Theorem~\ref{MAIN2} the power function
$\pi:D_n\to\mathbb{Z}_k$ is a group homomorphism from $D_n$ to the multiplicative
group $\mathbb{Z}_k^*$, so
\[
e^{-1}\equiv\pi(a^{-1})\equiv\pi(b^{-1}ab)\equiv\pi(b)\equiv  e\pmod{k},
\]
which is equivalent to $e^2\equiv1\pmod{k}$. Hence $\pi(a^{2i})\equiv\pi(a^{2i}b)\equiv1$ and $\pi(a^{2i+1})\equiv\pi(a^{2i+1}b)\equiv e$.
 Since the number of distinct values of the power
function is equal to  $|D_n:\Ker\varphi|=2$, we have $e\not\equiv1\pmod{k}$. To proceed we distinguish two cases:

\begin{case}[I]~$l=0$.\par
In this case we have
\[
\varphi(a)=a^{1+2r},\quad \varphi(b)=ba^{2s}\quad\text{and}\quad \varphi(a^2)=a^{2u}.
\]
Then $\varphi(a^{2i})=\varphi(a^2)^{i}=a^{2iu}$ and
$\varphi(ba^{2i})=\varphi(b)\varphi(a^2)^{i}=ba^{2iu+2s}$. Similarly, $\varphi(a^{2i+1})=\varphi(a^{2i}a)=\varphi(a^2)^{i}\varphi(a)=
a^{2iu+2r+1}$ and $\varphi(ba^{2i+1})=\varphi(ba^{2i}a)=\varphi(b)\varphi(a^{2i})\varphi(a)
=ba^{2r+2s+2iu+1}.$ Hence the skew-morphism has the form given by Eq.~\eqref{SKEW2}.

Using induction it is easy to prove that
$\varphi^j(a)=a^{1+2r\sigma(u,j)}$ and $\varphi^j(b)=ba^{2s\sigma(u,j)}$ where $j$ is a positive integer and
$\sigma(u,j)=\sum\limits_{i=1}^ju^{i-1}.$ Since $D_n=\langle a,b\rangle$, $k=|\varphi|$ is the smallest positive integer
such that $\varphi^k(a)=a$ and $\varphi^k(b)=b$, which imply that $r\sigma(u,k)\equiv0\pmod{n/2}$ and
$s\sigma(u,k)\equiv0\pmod{n/2}.$

Moreover,  since $\varphi(a)\varphi^e(a^2)=\varphi(aa^2)=\varphi(a^2a)=\varphi(a^2)\varphi(a)$,
we have $\varphi(a)\varphi^e(a^2)=a^{1+2r+2u^e}$ and $\varphi(a^2)\varphi(a)=a^{1+2r+2u},$ so $u^{e-1}\equiv1\pmod{n/2}$.

Note that $a^{2u}=\varphi(a^2)=\varphi(a)\varphi^e(a)=a^{1+2r}a^{1+2r\sigma(u,e)}=a^{2+2r+2r\sigma(u,e)},$
so we obtain
\begin{align}\label{EQTR1}
 r\big(\sigma(u,e)+1\big)\equiv u-1\pmod{n/2}.
\end{align}
Similarly, $\varphi(a)\varphi^e(b)=\varphi(ab)=\varphi(ba^{-1})=\varphi(b)\varphi(a^{-1})
=\varphi(b)\varphi(a^{-2}a)=\varphi(b)\varphi(a^{-2})\varphi(a).$ By the above formula
$\varphi(a)\varphi^e(b)=a^{1+2r}ba^{2s\sigma(u,e)}=ba^{-1-2r+2s\sigma(u,e)}$ and
$\varphi(b)\varphi(a^{-2})\varphi(a)=ba^{1+2r+2s-2u}.$ Consequently
\begin{align}\label{EQTR2}
s(\sigma(u,e)-1)\equiv-u+2r+1\pmod{n/2}.
\end{align}
Recall that $u^{e-1}\equiv1\pmod{n/2}$, so $\sigma(u,e)=\sigma(u,e-1)+u^{e-1}\equiv\sigma(u,e-1)+1\pmod{n/2}$.
Upon substitution Eqs.~\eqref{EQTR1} and \eqref{EQTR2} are reduced to $r\sigma(u,e-1)\equiv u-2r-1\pmod{n/2}$ and
$s\sigma(u,e-1)\equiv-u+2r+1\pmod{n/2}$. Since $e<k$, the minimality of $k$ implies that
$r\sigma(u,e-1)\not\equiv0\pmod{n/2}$ or $s\sigma(u,e-1)\not\equiv0\pmod{n/2}$, so $u-2r-1\not\equiv0\pmod{n/2}$.

\end{case}

\begin{case}[II] $l=1$.\par
In this case we have
\[
\varphi(a)=ba^{1+2r},\quad \varphi(b)=ba^{2s}\quad\text{ and }\quad\varphi(a^2)=a^{2u}.
\]
Then $\varphi(a^{2i})=a^{2iu}$ and $\varphi(ba^{2i})=\varphi(b)\varphi(a^{2i})=ba^{2s+2iu}$. Similarly,
$\varphi(a^{2i+1})=\varphi(a^{2i}a)=a^{2iu}ba^{1+2r}=ba^{2r-2iu+1}$ and $\varphi(ba^{2i+1})= a^{2r-2s-2iu+1}$.
Hence $\varphi$ has the form given by Eq.~\eqref{SKEW3}.

Using induction it is easy to derive the following formula
\[
\varphi^j(b)=ba^{2s\sigma(u,j)} \quad\text{and}\quad
\varphi^j(a)=
\begin{cases}
a^{2r\xi(u,j)-2s\zeta(u,j)+1},&\text{if $j$ is even,}\\
ba^{2r\xi(u,j)+2su\zeta(u,j-1)+1},&\text{if $j$ is odd}
\end{cases}
\]
where $j\geq2$ is a positive integer and
\[
 \sigma(u,j)=\sum_{i=1}^ju^{i-1},\quad \xi(u,j)=\sum_{i=1}^j(-u)^{j-1}\quad\text{and}\quad\zeta(u,j)=\sum_{i=1}^{j/2}u^{2(i-1)}.
\]
Since $\varphi(a)=ba^{1+2r}$ and $D_n=\langle a,ba^{1+2r}\rangle$, $k=|\varphi|=|O_a|$, so $k$ is the smallest positive integer such that
 $r\xi(u,k)\equiv s\zeta(u,k)\pmod{n/2}$. In particular we see $k$ must be even.

Note that $\varphi(a^2)=\varphi(a)\varphi^e(a)$. Since $\gcd(e,k)=1$, $e$ is odd, so by the above formula
$\varphi(a)\varphi^e(a)=ba^{1+2r}ba^{2r\xi(u,e)+2st\zeta(u,e-1)+1} =a^{2r\xi(u,e)-2r+2su\zeta(u,e-1)}.$
Recall that $\varphi(a^2)=a^{2u}$. Consequently we obtain
\begin{align}\label{CAYEQ1}
r\xi(u,e)+su\zeta(u,e-1)\equiv r+u\pmod{n/2}.
\end{align}

Furthermore, $\varphi(a)\varphi^e(a^2)=\varphi(aa^2)=\varphi(a^2a)=\varphi(a^2)\varphi(a)$. By the above formula we have
$\varphi(a)\varphi^e(a^2)=ba^{1+2r+2u^e}$ and $\varphi(a^2)\varphi(a)=a^{2u}ba^{1+2r}=ba^{2r-2u+1},$
so $u^{e-1}\equiv-1\pmod{n/2}$. Similarly $\varphi(a)\varphi^e(b)=\varphi(ab)
=\varphi(ba^{-2}a)=\varphi(b)\varphi(a^{-2})\varphi(a),$ using substitution we get
$\varphi(a)\varphi^e(b)=ba^{1+2r}ba^{2s\sigma(u,e)}=a^{-1-2r+2s\sigma(u,e)}$ and
$\varphi(b)\varphi(a^{-2})\varphi(a)=ba^{2s-2u}ba^{1+2r}=a^{1+2r-2s+2u}.$  Hence
\begin{align}\label{CAYEQ2}
s\sigma(u,e)\equiv 1+2r+u-s\pmod{n/2}.
\end{align}
Recall that $u^{e-1}\equiv-1\pmod{n/2}$, so $\sigma(u,e)\equiv\sigma(u,e-1)-1\pmod{n/2}$
and $\xi(u,e)\equiv\xi(u,e-1)-1\pmod{n/2}$. Upon substitution Eqs.~\eqref{CAYEQ1} and \eqref{CAYEQ2} are
reduced to
\begin{align}
&r\xi(u,e-1)+su\zeta(u,e-1)\equiv 2r+u\pmod{n/2},\\
&s\sigma(u,e-1)\equiv2r+u+1\pmod{n/2}.
\end{align}
Subtracting we get $r\xi(u,e-1)\equiv s\zeta(u,e-1)-1\pmod{n/2}$.

Finally, note that
\[
\xi(u,2(e-1))=\sum_{i=1}^{2(e-1)}(-u)^{2(e-1)}=\sum_{i=1}^{e-1}(-u)^{i-1}+u^{e-1}\sum_{i=1}^{e-1}(-u)^{i-1}\equiv0\pmod{n/2},
\]
and
\[
\zeta(u,2(e-1))=\sum_{i=1}^{e-1}u^{2i}=\sum_{i=1}^{(e-1)/2}u^{2(i-1)}+u^{e-1}\sum_{i=1}^{(e-1)/2}u^{2(i-1)}\equiv0\pmod{n/2},
\]
Hence $r\xi(u,2(e-1))\equiv s\zeta(u,2(e-1))\pmod{n/2}$. The minimality of $k$ yields $k\mid 2(e-1)$. But $e-1<k$, which forces
$k=2(e-1)$.
\end{case}
Conversely, in each case for any quadruple $(r,s,u,e)$ satisfying the numerical conditions, it is straightforward to
verify that $\varphi$ of the given form is a smooth skew-morphism of $D_n$ with $\Ker\varphi=\langle a^2,b\rangle$ and $\pi$ is the
associated power function. The details are left to the reader.
\end{proof}
\begin{remark}Let $\varphi$ be a skew-morphism from (II) of Theorem~\ref{CLASS1}. Note that the orbit of $\varphi$
 containing $a^{2i+1}$ also contains $ba^{2r-2i+1}$, so the orbit $O_a$ generates $D_n$. Clearly $O_a$ is closed under taking inverse. Therefore
$\varphi$ gives rise to an $e$-balanced regular Cayley map of $D_n$ of even valency $2(e-1)$. Such Cayley maps were first
classified by Kwak, Kwon and Feng in \cite{KKF2006}.
\end{remark}

%%%%%%%%%%%%%%%%%%%%%%%%%%%  Acknowlegement  %%%%%%%%
\section*{Acknowledgement}
The first and second author are supported by the following grants: Natural Science Foundation of Zhejiang Province (LQ17A010003, LY16A010010) and Scientific Research Foundation of Zhejiang Ocean University (21065014115, 21065014015). The third and fourth author are supported by the National Natural Science Foundation of China (11671276). The fourth author is supported by National Natural Science Foundation of China (11401290) and Natural Science Foundation of Fujian (2016J01027).

\section*{References}


\begin{thebibliography}{99}
\bibitem{BJ2014}M. Bachrat\'y, R. Jajcay, Powers of skew-morphisms, in: Symmetries in  Graphs, Maps, and
Polytopes, 5th SIGMAP Workshop, West Malvern, UK July 2014 (J.\v{S}ir\'a\v{n} and R. Jajcay, eds.),
Springer Proceedings in Mathematics \& Statistics 159 (2016) 1--26.
\bibitem{BJ2016}M. Bachrat\'y, R. Jajcay, Classification of coset-preserving skew-morphisms of finite cyclic groups, Austr. J. Combin. 67(2) (2017) 259--280.
\bibitem{CJT2007}M. Conder, R. Jajcay, T. Tucker, Regular $t$-balanced Cayley maps, J. Combin. Theory Ser. B 97 (2007) 453--473.
\bibitem{CJT2016}M. Conder, R. Jajcay, T. Tucker, Cyclic complements and skew morphisms of groups, J. Algebra 453 (2016) 68--100.
\bibitem{CT2014}M. Conder, T. Tucker, Regular Cayley maps for cyclic groups, Trans. Amer. Math. Soc. 366 (2014) 3585--3609.
\bibitem{Hu2012}K. Hu, Theory of skew-morphisms, preprint, 2012.
\bibitem{JN2014}R. Jajcay, R. Nedela, Half-Regular Cayley Maps, Graphs and Combinatorics. 31(4) (2015) 1003--1018.
\bibitem{JS2002}R. Jajcay, J. \v{S}ir\'{a}\v{n}, Skew-morphisms of regular Cayley maps, Discrete Math. 224 (2002) 167--179.
\bibitem{KN2011}I. Kov\'acs, R. Nedela, Decomposition of skew morphisms of cyclic groups, Ars Math. Contemp. 4 (2011) 329--249.
\bibitem{KK2016}I. Kov\'acs, Y.S. Kwon, Regular Cayley maps on dihedral groups with smallest kernel, J. Algebraic Combin. 44 (2016) 831--847.
\bibitem{KMM2013}I. Kov\'acs, D. Maru\v{s}i\v{c}, M.E. Muzychuk, On $G$-arc-regular dihedrants and regular dihedral maps, J. Algebraic Combin. 38 (2013) 437--455.
\bibitem{KKF2006}J.H. Kwak, Y.S. Kwon, R. Feng, A classification of regular t-balanced Cayley maps on dihedral groups. European J. Combin. 27 (2006) 382--392.
\bibitem{KO2008}J.H. Kwak, J.M. Oh, A classification of regular t-balanced Cayley maps on dicyclic groups, European J. Combin. 29 (2008) 1151--1159.
\bibitem{KWON2013}Y.-S. Kwon, A classification of regular t-balanced Cayley maps for cyclic groups, Discrete math. 313(5) (2013) 656--664.
\bibitem{Oh2009}J.M. Oh, Regular $t$-balanced Cayley maps on semi-dihedral groups, J. Combin. Theory Ser. B 99 (2009) 480--493.
\bibitem{WF2005}Y. Wang, R. Feng, Regular balanced Cayley maps for cyclic, dihedral and generalized quaternion groups, Acta Math. Sin.(Engl. Ser.) 21 (2005) 773--778.
\bibitem{Zhang2014} J.Y. Zhang, Regular Cayley maps of skew-type $3$ for abelian groups, European J. Combin. 39 (2014) 198--206.
\bibitem{Zhang20152} J.Y. Zhang, A classification of regular Cayley maps with trivial Cayley-core for dihedral groups, Discrete Math. 338 (2015) 1216--1225.
\bibitem{Zhang2015}J.Y. Zhang, Regular Cayley maps of skew-type $3$ for dihedral groups, Discrete Math. 388 (2015) 1163--1172.
\bibitem{ZD2016}J.Y. Zhang, S. Du,  On the skew-morphisms of dihedral groups, J. Group Theory 19 (2016) 993--1016.
\end{thebibliography}
\end{document}